\theoremstyle{definition} 
 \newtheorem{definition}{Definition}[section]
 \newtheorem{remark}[definition]{Remark}
\theoremstyle{plain}      
 \newtheorem{proposition}[definition]{Proposition}
 \newtheorem{theorem}[definition]{Theorem}
 \newtheorem{corollary}[definition]{Corollary}
 \newtheorem{lemma}[definition]{Lemma}
\newtheorem*{theorem*}{Theorem}
\newcommand{\R}{\mathbb R}
\newcommand{\C}{\mathbb C}
\newcommand{\CP}{\mathbb {CP}^1}
\newcommand{\RP}{\mathbb {RP}^1}
\newcommand{\la}{\langle}
\newcommand{\ra}{\rangle}
\renewcommand{\P}{\mathbb {P}}
\renewcommand{\H}{\mathbb H}
\renewcommand{\S}{\mathbb S}
\newcommand{\bp}{\bm p}
\newcommand{\bmm}{\bm m}
\newcommand{\bw}{\bm w}
\newcommand{\bomega}{\bm \omega}
\renewcommand{\phi}{\varphi}
\newcommand{\bphi}{\bm \phi}
\newcommand{\HdC}{{\mathbf H}_\C^2}
\renewcommand{\Re}{{\rm Re}}
\renewcommand{\Im}{{\rm Im}}
\newcommand{\PGL}{\mathrm{PGL}}
\newcommand{\PU}{\mathrm{PU}}
\author{E. Falbel, A. Guilloux, P. Will}
\title{Hilbert metric, beyond convexity}
\date{}
\begin{document}

\maketitle

\begin{abstract}
The Hilbert metric on convex subsets of $\R^n$ has proven a rich notion and has been extensively studied.
We propose here a generalization of this metric to subset of complex projective spaces and give examples
of applications to diverse fields.   Basic examples include the classical Hilbert metric which coincides with the hyperbolic metric on real  hyperbolic spaces as well as the complex hyperbolic metric on complex  hyperbolic spaces.
\end{abstract}

\section{Introduction}

The Hilbert metric on convex subsets of $\R^n$ is a well-known and well-studied
object. We refer the reader to \cite{HandbookHilbert} for a comprehensive introduction
to the metric aspects. A major domain of applications is the field of
divisible convex, where the invariance of the metric under projective transformations
of the convex is leveraged. Such a study originates in large part from
the series of works of Benoist begining with \cite{Benoist}. We refer to the
survey \cite{Benoist-survey} for a more precise description of these works.

We propose in this paper a generalization of the notion of Hilbert
metric to settings without convexity. The main idea to 
overcome the lack of convexity is to make use  of the duality between projective
spaces and dual projective spaces. A first attempt to give such
a generalization has been done by the second author in \cite{Guilloux-padic}. Whereas
the focus of the cited paper was projective spaces over local fields, 
we mainly work here in real and complex
projective spaces. 

We define in Section \ref{sec:definition} a notion of generalized Hilbert metric 
on a set $\Omega$ in $\P(\C^{n+1})$ as long
as $\Omega$ avoids a compact set of hyperplanes. For example, any open set in
$\CP$ inherits such a metric. The metric does not, in general, separate points
but we are able to determine the conditions under which it does (Theorem \ref{thm:metric}).
For example, any open subset of $\CP$ whose complement does not lie in a circle
inherits an actual metric. We then move on to a description of the 
associated infinitesimal Finsler metric
(Theorem \ref{theorem:metric}). An example of such a generalized Hilbert metric is the 
usual hyperbolic metric on 
complex hyperbolic space, just as the hyperbolic metric on the real hyperbolic
space is known to be an example of Hilbert metric. Note that our metric is naturally
invariant under projective transformations of $\Omega$.  A more general problem is to compare the Bergman metric to ours 
for bounded domains in $\C^n$ (see remark \ref{shilov}).

We then give three different directions of application to these definitions. We hope
that the given definition will prove useful in a wealth of problems and try to
convince the reader so. The first direction (Section \ref{sec:complexkleinian}) 
is at the origin of this work: we explore the
meaning of the definition for complex Kleinian hyperbolic groups, i.e. discrete subgroups
of $\PU(n,1)$. We are able to reinterpret in a geometric way
our definition and prove that it defines a natural metric on uniformized spherical CR manifolds. 

The second direction (Section \ref{section:puncturedsphere}) deals with a very 
simple example, akin to the polygon case of the Hilbert metric (see \cite{delaHarpe}): 
the $n$-punctured projective line, where the punctures do not lie in a single circle.
We prove that our metric is quasi-isometric - but not isometric - to the 
hyperbolic metric on the $n$-punctured sphere. 

Eventually, we look in 
Section \ref{section:RP1} at an example that may seem strange at first 
glance: any open subset of the real projective line, even if not connected, inherits a metric. 
We focus on complements of self-similar compact sets (such as limit sets of Fuchsian groups or 
self-similar Cantor sets) and are able to prove a generalization of a formula due to 
Basmajian in the setting of hyperbolic surfaces.

We thank Gilles Courtois and Pascal Dingoyan for several useful discussions. The third author 
would like to thank Hugo Parlier for an enlightening discussion. Many discussions around this project 
occured at the UMPA, and we thank that institution for its kind hospitality.

\section{The metric}\label{sec:definition}

We define here a metric on subsets of projective spaces avoiding a compact set of hyperplanes, under a
non degeneracy hypothesis. The idea is to redefine the usual Hilbert metric in a more suitable way
to a generalization to projective spaces on other fields than $\R$. A first attempt, with mainly the $p$-adic fields in mind, has been done in \cite{Guilloux-padic}. We are interested in this paper in the complex case, so we will always assume that we are working in complex projective spaces. Note that the real case follows, by including the real projective spaces in its complexification.

We will regularly switch between elements in projective spaces and lifts in the vector spaces. In order to be able to do it without cumbersome definitions and notations, we fix a convention throughout this paper:
points in projective spaces will be denoted by symbols like $\omega$, $\phi$, $m$, $p$... and any lift will be denoted by the bold corresponding symbol $\bomega$, $\bphi$, $\bmm$, $\bp$...

\subsection{Setting and first examples}

We fix throughout the following section an integer $n\geq 1$. We denote by $\P$ the $n$-dimensional projective space $\P(\C^{n+1})$ and by $\P'$ its dual
$\P((\C^{n+1})^\vee)$. We consider two non-empty subsets $\Omega \subset \P$ and 
$\Lambda \subset \P'$ such that

\begin{equation}\label{eq:condi-subset}\forall \omega \in \Omega,\, \forall \phi \in \Lambda \ \ \ \ \bphi(\bomega)\neq 0.\end{equation}

Geometrically, each point in $\P'$ represents a hyperplane in $\P$, and condition 
\eqref{eq:condi-subset} means that
$\Omega$ is disjoint from all hyperplanes defined by points in $\Lambda$. 
The following definition gives a name to such pairs.
\begin{definition}
A pair $(\Omega,\Lambda)$, where $\Omega$ is open in $\P$, $\Lambda$ closed in $\P'$ and
condition \ref{eq:condi-subset} holds, is called \emph{admissible}.
 
Morever, we say that $\Omega$ is \emph{saturated} if it is the set of points on which forms
of $\Lambda$ do not vanish.
\end{definition}

We will define a weak metric on the set $\Omega$ of admissible pairs. Here are the examples of 
typical situations we are 
thinking of. As hinted by the notation, such examples often come with considering $\Lambda$ to be a
limit set in $\P'$ for the action of a subgroup $\Gamma \subset \PGL(n+1,\C)$ and
$\Omega$ to be the set of points on which elements of $\Lambda$ do not vanish.
\begin{enumerate}
\item If $\Omega$ is a bounded domain of $\C^n\subset \P$, we can take
$\Lambda$ to be the set of complex lines that do not meet $\Omega$. The pseudo-metric
we will define on these examples may not  be complete or even separate points. But in
the case of the unit ball, we recover the Bergman metric (see Remark \ref{rem:complexhyperbolicspace}).
\item Let $\Gamma$ be a quasi-Fuchsian subgroup of $\PGL(2,\C)$, 
with limit set $\Lambda_\Gamma\subset \CP$. Note that for $n=1$,
then $\CP = \P$ is naturally identified to its dual $\P'$: the
isomorphism sends a point in $\CP$ to its orthogonal i.e. the class
of forms vanishing at this point. 
So $\Lambda_\Gamma$ is seen as a subset, our $\Lambda$, of $\P'$.
We thus set $\Omega=\Omega_\Gamma$, the complement in 
$\CP$ of $\Lambda_\Gamma$. In this case, 
the pair $\Omega,\Lambda$ satisfies \eqref{eq:condi-subset}, it is an admissible pair and $\Omega$ is saturated.
 
\item One may consider a discrete subgroup $\Gamma$ of $\PU(n,1)$ and suppose
that its limit set $\Lambda_\Gamma$ in the sphere $\S^{2n-1}$ at 
infinity of complex hyperbolic space $\H^n_\C$ is not the whole sphere.
For each point $p\in \S^{2n-1}$, there exists a unique projective complex hyperplane 
tangent to $\S^{2n-1}$ at $p$, which we denote by $L_p$. To each point in $\Lambda_\Gamma$ we associate the (class of) linear form
$\phi_p$ defined by $\P(\ker(\bphi_p))=L_p$. 
Note that $\phi_p$ is actually just the class of the bracket $\la ,\bp\ra$, where 
$\bp$ is a lift of $p$ to $\C^{n+1}$, and $\la \cdot,\cdot \ra$ is the ambient Hermitian form of signature $(n,1)$. We then define 
$$\Lambda=\lbrace \phi_p, p\in\Lambda_\Gamma\rbrace,\mbox{ and } \Omega=\underset{p\in\Lambda_\Gamma}{\bigcap} L_p^c,$$
where $L_p^c$ denotes the complement. These two sets satisfy \eqref{eq:condi-subset}. 
We will go into more details in Section \ref{sec:complexkleinian}. We will see 
that this $\Omega$ is the complement of the Kulkarni limit set and interpret our metric in a geometric manner.

\item If $\Omega$ is an open proper convex set in $\R^n \subset \P(\R^{n+1})$, then one take for 
$\Lambda$ the set of forms that do not vanish on $\Omega$. In standard terminology used for convex sets in affine
real space, $\Lambda$ is the closure of the dual convex to $\Omega$. In this case, we have the usual notion of Hilbert metric \cite{delaHarpe,HandbookHilbert} with a huge literature studying various properties of this metric.
This example is our benchmark: everything we define in a more general setting has to specialize to
the usual Hilbert metric for these pairs $(\Omega,\Lambda)$.
\end{enumerate}

\subsection{The cross-ratio}

Our main source of inspiration is the Hilbert metric, whose definition relies on the
notion of cross-ratio of four points on a projective line. We fix in this section the convention
on cross-ratios and
define a natural notion of cross-ratio between two forms and two points and gather
some well-known properties.

We take as a definition of the cross-ratio of four distinct points in $\CP$ with coordinates $(a,b,c,d)$ 
the element $t=[a,b,c,d]$ such that there is an element of $\PGL(2,\C)$ sending the 
4-tuples $(a,b,c,d)$ to 
$(\infty,0,1,t)$. In any affine chart, it is given by:
\begin{equation}\label{eq:usual-crossratio}[a,b,c,d]=\dfrac{(d-b)(c-a)}{(d-a)(c-b)}.\end{equation}

We now define the cross-ratio $[\phi,\phi',\omega,\omega']$ for $\phi$ and $\phi'$ in $\P'$ and 
$\omega$, $\omega'$ in $\P$. For any two $\omega\neq\omega'$ in $\P$, we denote by $(\omega\omega')$ the (complex) projective line they span.
\begin{definition} \label{def:crossratio}
\begin{enumerate}
  \item  For any $\phi\in \Lambda$ and any pair $(\omega,\omega')$ of distinct points in $\Omega$, let 
$\phi_{\omega,\omega'}$ be the point $\ker(\phi)\cap (\omega,\omega')$ in  $(\omega,\omega')$.
\item We call {\it cross-ratio of $(\phi,\phi',\omega,\omega')$} the cross-ratio 
\begin{equation}\label{eq:defi-crossratio}
[\phi,\phi',\omega,\omega'] := [\phi_{\omega,\omega'},\phi'_{\omega,\omega'},\omega,\omega']
\end{equation}
\end{enumerate}
\end{definition}

Note that the four points involved in \eqref{eq:defi-crossratio} lie in a common complex line by definition.
The cross-ratio of $(\phi,\phi',\omega,\omega')$  can be computed simply with lifts $(\bphi,\bphi',
\bomega,\bomega')$ as follows (compare with \cite[Lemma 3.1]{Guilloux-padic}).

\begin{lemma}
 Let $(\phi,\phi',\omega,\omega')\in\Lambda^2\times\Omega^2$. The cross-ratio
$[\phi,\phi',\omega,\omega']$ satisfies
\begin{equation}
\label{eq:comput-crossratio}[\phi,\phi',\omega,\omega']=\dfrac{\bphi(\bomega)\bphi'(\bomega')}{\bphi(\bomega')\bphi'(\bomega)}
\end{equation}
\end{lemma}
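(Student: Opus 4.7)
The plan is to reduce the identity to the coordinate formula \eqref{eq:usual-crossratio} by a direct computation in a suitable affine chart of the complex projective line $(\omega\omega')$. Since all four points $\phi_{\omega,\omega'}$, $\phi'_{\omega,\omega'}$, $\omega$, $\omega'$ lie on this line by construction, it suffices to locate them in coordinates and apply \eqref{eq:usual-crossratio}.

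First, I would parametrize $(\omega\omega')$ by lifts in $\C^{n+1}$: every point has a lift of the form $s\bomega + t\bomega'$ with $[s:t]\in\CP$, and $\omega$, $\omega'$ correspond to $[1:0]$, $[0:1]$. The condition $\bphi(s\bomega+t\bomega')=s\bphi(\bomega)+t\bphi(\bomega')=0$ then yields $[s:t]=[\bphi(\bomega'):-\bphi(\bomega)]$ for $\phi_{\omega,\omega'}$, and similarly for $\phi'_{\omega,\omega'}$. The admissibility condition \eqref{eq:condi-subset} ensures that $\bphi(\bomega), \bphi(\bomega'), \bphi'(\bomega), \bphi'(\bomega')$ are all nonzero, so these points are well-defined and distinct from $\omega, \omega'$ in $(\omega\omega')$.

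Passing to the affine chart $s=1$, with coordinate $t\in\C\cup\{\infty\}$, we get $\omega\leftrightarrow 0$, $\omega'\leftrightarrow \infty$, and
$$\phi_{\omega,\omega'}\leftrightarrow t_\phi:=-\frac{\bphi(\bomega)}{\bphi(\bomega')},\qquad \phi'_{\omega,\omega'}\leftrightarrow t_{\phi'}:=-\frac{\bphi'(\bomega)}{\bphi'(\bomega')}.$$
Applying formula \eqref{eq:usual-crossratio} with the last argument at infinity (either by taking the limit $d\to\infty$ or, equivalently, by exhibiting the explicit element of $\PGL(2,\C)$ sending $(t_\phi,t_{\phi'},0,\infty)$ to $(\infty,0,1,t_\phi/t_{\phi'})$), one obtains
$$[\phi_{\omega,\omega'},\phi'_{\omega,\omega'},\omega,\omega']=\frac{t_\phi}{t_{\phi'}}=\frac{\bphi(\bomega)\bphi'(\bomega')}{\bphi(\bomega')\bphi'(\bomega)},$$
which is the required identity.

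There is no genuine obstacle; the computation is elementary. The only points worth noting are that the right-hand side is manifestly invariant under rescaling any of the lifts $\bomega,\bomega',\bphi,\bphi'$, as it should be since the left-hand side is defined on projective classes, and that the use of \eqref{eq:usual-crossratio} with one point at infinity is standard and consistent with the $\PGL(2,\C)$-definition of the cross-ratio given at the start of the subsection.
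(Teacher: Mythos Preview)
Your proof is correct and, like the paper's, reduces to a direct coordinate computation of the cross-ratio on the line $(\omega\omega')$. The difference lies in the choice of chart: the paper picks a basis of $\C^{n+1}$ adapted to the \emph{forms}, setting $\bphi=e_1^\vee$, $\bphi'=e_2^\vee$ (which forces a preliminary case split $\phi=\phi'$ versus $\phi\neq\phi'$), and then expresses $\bomega$, $\bomega'$ as combinations of the two intersection points $\phi_{\omega,\omega'}$, $\phi'_{\omega,\omega'}$. You instead parametrize the line by the \emph{points} $\bomega$, $\bomega'$ themselves, which are always distinct, and read off the intersection points by solving $\bphi(s\bomega+t\bomega')=0$. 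This dual choice is marginally cleaner: it handles the degenerate case $\phi=\phi'$ uniformly (both sides equal $1$) and makes the invariance under rescaling of lifts transparent, at the small cost of invoking the cross-ratio formula with one argument at infinity rather than at four finite points.
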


\begin{figure}
\centering
\scalebox{0.4}{\includegraphics{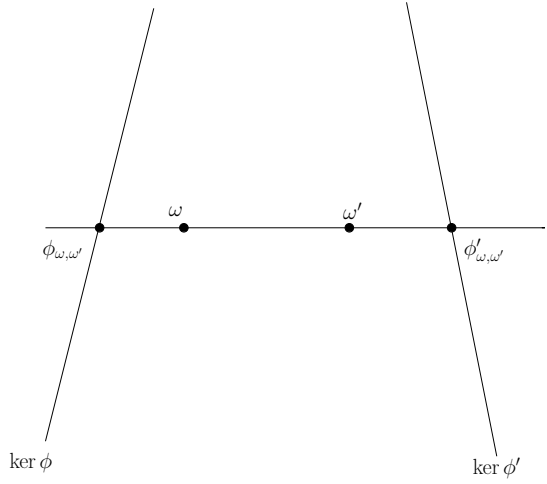}}
\caption{The cross-ratio $[\phi,\phi',\omega,\omega']$}\label{fig:cross-ratio}
\end{figure}

\begin{proof}
The proof is classical and elementary. It is summarized by Figure \ref{fig:cross-ratio}. We include it for completeness.

If $\phi=\phi'$ the identity is obvious.  If $\phi\neq \phi'$,
choose a basis $(e_k)_{1\leqslant k\leqslant n+1}$ of $\C^{n+1}$ and lifts
 such that $\bphi$, $\bphi'$, $\bphi_{\omega,\omega'}$ and $\bphi'_{\omega,\omega'}$ 
are as follows:
$$\bphi = e_1^\vee, \bphi'= e_2^\vee, \bphi_{\omega,\omega'}= e_2+ \bw, 
\bphi'_{\omega,\omega'} = e_1+\bw',$$
where $\bw$ and $\bw'$ are vectors in ${\rm Span}(e_3\cdots e_{n+1})$.
Then $\bomega$ and $\bomega'$ have the form
$$\bomega = \lambda \bphi_{\omega,\omega'} + \lambda'\bphi'_{\omega,\omega'}\mbox{ and } \bomega'= \mu \bphi_{\omega,\omega'} + \mu'\bphi'_{\omega,\omega'}.$$
Computing the right hand side of \eqref{eq:comput-crossratio}, we obtain
\begin{equation*} 
\dfrac{\bphi(\bomega)\bphi'(\bomega')}{\bphi(\bomega')\bphi'(\bomega)}  = 
\dfrac{\mu\lambda'}{\lambda \mu'}  = 
[\phi_{\omega,\omega'},\phi'_{\omega,\omega'},\omega,\omega'],
\end{equation*}
where the second equality is obtain using \eqref{eq:usual-crossratio} by noting 
that with the chosen coordinates, 
the four points $\bphi_{\omega,\omega'}$, $\bphi'_{\omega,\omega'}$, 
$\bomega$ and $\bomega'$ are given by
$$ \bphi_{\omega,\omega'}\sim 0,\, \bphi'_{\omega,\omega'}
\sim \infty,\, \bomega\sim\dfrac{\lambda}{\lambda'}
\mbox{ and } \bomega'\sim\dfrac{\mu}{\mu'}.$$
\end{proof}

The following identities follow by a direct verification.
\begin{proposition}\label{prop:birapport-identities}
Let $\omega,\omega',\omega''$ be three points in $\Omega$, and $\phi,\phi',\phi''$ be three points in $\Lambda$.
Then
\begin{enumerate}
  \item $[\phi,\phi',\omega,\omega'] = [\phi',\phi,\omega',\omega]$
  \item $[\phi,\phi',\omega,\omega']= [\phi,\phi',\omega,\omega''][\phi,\phi',\omega'',\omega']$
  \item $[\phi,\phi',\omega,\omega']=[\phi,\phi'',\omega,\omega'][\phi'',\phi',\omega,\omega']$
\end{enumerate}
\end{proposition}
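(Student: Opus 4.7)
The plan is to deduce the three identities directly from the lift formula \eqref{eq:comput-crossratio}, namely
$$[\phi,\phi',\omega,\omega']=\dfrac{\bphi(\bomega)\bphi'(\bomega')}{\bphi(\bomega')\bphi'(\bomega)}.$$
Once this formula is available, each identity becomes a purely algebraic statement about products and quotients of the scalars $\bphi(\bomega)$, $\bphi(\bomega')$, and so on. The admissibility condition \eqref{eq:condi-subset} guarantees that every such scalar is nonzero, so all the manipulations below are legal without any case analysis.

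For identity (1), I would simply transcribe the right-hand side of \eqref{eq:comput-crossratio} after swapping $\phi \leftrightarrow \phi'$ and $\omega \leftrightarrow \omega'$ simultaneously, and observe that the resulting four-factor expression is literally the same as the original: both the numerator $\bphi(\bomega)\bphi'(\bomega')$ and the denominator $\bphi(\bomega')\bphi'(\bomega)$ are preserved. For identity (2), the argument is a telescoping cancellation: multiplying the formulas for $[\phi,\phi',\omega,\omega'']$ and $[\phi,\phi',\omega'',\omega']$ yields
$$\frac{\bphi(\bomega)\bphi'(\bomega'')}{\bphi(\bomega'')\bphi'(\bomega)}\cdot\frac{\bphi(\bomega'')\bphi'(\bomega')}{\bphi(\bomega')\bphi'(\bomega'')},$$
and the factors $\bphi(\bomega'')$ and $\bphi'(\bomega'')$ each appear once in numerator and once in denominator, leaving exactly the expression for $[\phi,\phi',\omega,\omega']$. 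Identity (3) is symmetric: the same cancellation applies but this time to $\bphi''(\bomega)$ and $\bphi''(\bomega')$.

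There is essentially no obstacle here: the previous lemma does all of the work, and the proposition is a one-line verification once the formula is written out. The only thing worth flagging is the conceptual content of the three identities, which is what makes the formula a candidate for a Hilbert-type metric: (1) is the duality between the form variables and the point variables; (2) is a multiplicative cocycle relation in the point variable $\omega$; and (3) is the analogous cocycle relation in the form variable $\phi$. These are exactly the relations one would need in order to later pass to $\log$ and take a supremum over $\Lambda$ to produce an additive pseudo-distance.
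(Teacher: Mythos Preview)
Your proposal is correct and matches the paper's approach exactly: the paper simply states that the identities ``follow by a direct verification,'' and what you have written is precisely that verification carried out using the lift formula \eqref{eq:comput-crossratio}. There is nothing to add.
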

The last two equalities are known as \emph{cocycle relation} for the cross-ratio. With these properties at hand, we may proceed to the definition of our metric.

\subsection{A generalized Hilbert pseudo-metric}

From now on, we assume that the set $\Lambda$ is compact. Our Hilbert metric
is defined by the following:
\begin{definition}
Let $d_\Lambda$ be the function defined $\Omega \times \Omega$ by
\begin{equation}\label{eq:def-dist}
d_\Lambda (\omega,\omega') = \ln\left(\max \left\{ 
            \left|\left[\phi,\phi',\omega,\omega'\right]\right| \textrm{ for }
            \phi,\phi'\textrm{ in } \Lambda\right\}\right)
\end{equation}
\end{definition}

As noted in \cite[Section 3.2]{Guilloux-padic}, in the case of open proper convex 
subset of $\P(\R^{n+1})$, 
we recover the usual Hilbert metric up to a factor $\frac{1}{2}$. This formula
is reminiscent of a metric associated to a Funk metric \cite{PapadopoulosTroyanov}.
We will take advantage of this remark later on, by separating the contributions
of $\phi$ and $\phi'$.

In our more general setting, $d_\Lambda$ is not quite a metric but almost:
\begin{proposition}
The function $d_\Lambda$ is a pseudo-metric: it is non-negative, 
symmetric and satisfies to the triangle inequality.
\end{proposition}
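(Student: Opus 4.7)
The plan is to derive each of the three properties directly from the lift formula \eqref{eq:comput-crossratio} together with the cocycle identities of Proposition \ref{prop:birapport-identities}. A preliminary remark, which I would record up front, is that the maximum in \eqref{eq:def-dist} is actually attained and finite: by the admissibility condition \eqref{eq:condi-subset}, the expression in \eqref{eq:comput-crossratio} defines a continuous, nowhere-zero function of $(\phi,\phi')$ on the compact set $\Lambda\times\Lambda$, so the supremum is realized and positive.

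For non-negativity, I would simply observe that taking $\phi=\phi'$ in \eqref{eq:comput-crossratio} gives $[\phi,\phi,\omega,\omega']=1$, hence the maximum of the moduli is at least $1$ and $d_\Lambda(\omega,\omega')\geq 0$. Symmetry then falls out of part (1) of Proposition \ref{prop:birapport-identities}: the involution $(\phi,\phi')\mapsto(\phi',\phi)$ on $\Lambda\times\Lambda$ identifies the two sets of moduli over which one maximizes in the definitions of $d_\Lambda(\omega,\omega')$ and $d_\Lambda(\omega',\omega)$.

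The substantive point is the triangle inequality, which I would extract from the cocycle relation (2) of Proposition \ref{prop:birapport-identities}. For any $\phi,\phi'\in\Lambda$ and any intermediate point $\omega'$,
\[
[\phi,\phi',\omega,\omega''] \;=\; [\phi,\phi',\omega,\omega']\,[\phi,\phi',\omega',\omega''].
\]
Taking moduli and bounding each factor by its own maximum over $\Lambda\times\Lambda$ gives
\[
\bigl|[\phi,\phi',\omega,\omega'']\bigr| \;\leq\; e^{d_\Lambda(\omega,\omega')}\,e^{d_\Lambda(\omega',\omega'')}.
\]
Maximizing the left-hand side over $(\phi,\phi')\in\Lambda\times\Lambda$ and then applying $\ln$ yields the triangle inequality. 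I do not anticipate any serious obstacle here; the only delicate point is that compactness of $\Lambda$ is essential to replace each of the two right-hand factors by a genuine realized maximum rather than a supremum, after which the argument is just the translation of the multiplicativity of the cross-ratio through $\log|\cdot|$.
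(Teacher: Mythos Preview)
Your proof is correct and follows essentially the same route as the paper: non-negativity from a trivial choice of $(\phi,\phi')$, symmetry from item (1), and the triangle inequality from the cocycle relation (2) of Proposition~\ref{prop:birapport-identities}. The only cosmetic difference is that the paper deduces non-negativity from the observation that swapping $\phi$ and $\phi'$ inverts the modulus of the cross-ratio (so the maximum is at least $1$), whereas you take $\phi=\phi'$; both are equally valid.
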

In the terms of \cite{PapadopoulosTroyanov-weakMinkowski}, $d_\Lambda$ is a symmetric weak metric.

\begin{proof}
Note that exchanging $\phi$ and $\phi'$ in $\left|\left[\phi,\phi',\omega,\omega'\right]\right|$ transforms it into 
its inverse. This implies in particular that $d_\Lambda$ is non-negative. The other two properties follow directly 
from the first two items of Proposition \ref{prop:birapport-identities}
\end{proof}

In general this metric does not separate points. Indeed, if $\Lambda$ consists of a single point, then every cross-ratio is $1$ and $d_\Lambda$ is trivial. A more interesting example is the case of $\Lambda = \{0,\infty\} \subset \CP$ and $\Omega = \C\setminus\{0\}$. Then, it is easy to compute that, for two non zero complex numbers $z$ and $z'$, we have:
$$d_{\{0,\infty\}} (z,z') = |\ln |z| - \ln |z'|\,|.$$
Thus, points of same modulus are at distance $0$. We will focus our attention to punctured spheres in section \ref{section:puncturedsphere}.

Before exploring the conditions for $d_{\Lambda}$ to be an actual metric, let us point out two 
consequences of the mere definition of this function. First, we remark that $d_\Lambda$ is invariant under projective transformations:
\begin{proposition}\label{prop:invariance}
Let $(\Omega, \Lambda)$ be an admissible pair. For any $g\in \PGL(n+1,\C)$, the pair $(g\cdot \Omega, g\cdot \Lambda)$ is admissible, and the action of $g$ is an isometry between $(\Omega, d_\Lambda)$ and $(g\cdot \Omega, d_{g\cdot \Lambda})$
\end{proposition}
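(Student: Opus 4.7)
The plan is to pin down how $\PGL(n+1,\C)$ acts on the dual $\P'$, check that admissibility is preserved under this action, and then reduce the isometry claim to the projective invariance of the cross-ratio, which follows immediately from the formula \eqref{eq:comput-crossratio}.

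First I would fix a lift of $g$ to $\GL(n+1,\C)$ and define the action on the dual by $g\cdot \phi = \phi \circ g^{-1}$ at the level of linear forms, which descends to $\P'$. With this convention, for any lifts one has $(g\cdot \bphi)(g\cdot \bomega) = \bphi(\bomega)$, so condition \eqref{eq:condi-subset} for $(\Omega,\Lambda)$ translates directly into \eqref{eq:condi-subset} for $(g\cdot \Omega, g\cdot \Lambda)$. Since $g$ is a homeomorphism of both $\P$ and $\P'$, it sends the open set $\Omega$ to an open set and the compact (in particular closed) $\Lambda$ to a compact set, so the new pair is admissible.

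Next I would invoke \eqref{eq:comput-crossratio}: for any $\phi,\phi' \in \Lambda$ and $\omega,\omega' \in \Omega$,
\begin{equation*}
[g\cdot\phi, g\cdot\phi', g\cdot\omega, g\cdot\omega'] = \frac{(g\cdot\bphi)(g\cdot\bomega)\,(g\cdot\bphi')(g\cdot\bomega')}{(g\cdot\bphi)(g\cdot\bomega')\,(g\cdot\bphi')(g\cdot\bomega)} = \frac{\bphi(\bomega)\bphi'(\bomega')}{\bphi(\bomega')\bphi'(\bomega)} = [\phi,\phi',\omega,\omega'].
\end{equation*}
Because $\phi \mapsto g\cdot\phi$ is a bijection $\Lambda \to g\cdot\Lambda$, taking the maximum of the modulus over $\phi,\phi'$ in $\Lambda$ yields the same value as the maximum over $g\cdot\Lambda$, so $d_{g\cdot\Lambda}(g\cdot\omega, g\cdot\omega') = d_\Lambda(\omega,\omega')$.

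There is no real obstacle here; the only point requiring a moment of care is being consistent about the contragredient action on $\P'$, so that the pairing $\bphi(\bomega)$ is genuinely invariant and not merely covariant up to a scalar. Once that convention is fixed, admissibility and isometry both drop out of the formula for the cross-ratio.
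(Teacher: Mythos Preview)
Your proof is correct and follows the same idea as the paper, which simply states in one sentence that the cross-ratio of Definition~\ref{def:crossratio} is invariant under projective transformations. You have spelled out the details the paper leaves implicit---the contragredient action on $\P'$, the preservation of openness and compactness, and the bijection $\Lambda \to g\cdot\Lambda$ needed to transfer the maximum---but the underlying argument is identical.
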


\begin{proof}
Indeed, the cross-ratio defined in Definition \ref{def:crossratio} is invariant under projective transformation.
\end{proof}
As a consequence, when $\Lambda$ is a limit set for a group $\Gamma$ and $\Omega$ its 
complement, as in the first examples described, $d_\Lambda$ is $\Gamma$-invariant.

The second fact we want to point out states the pseudo-convexity of $\Omega$.
\begin{proposition}
Let $(\Omega,\Lambda)$ be an admissible pair, with $\Omega$ satured. 

Then $\Omega$ is pseudo-convex and for any $\omega_0 \in \Omega$, the function 
$\omega \to d_\Lambda(\omega_0,\omega)$ is a subharmonic exhaustion.
\end{proposition}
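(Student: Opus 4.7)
The plan is to rewrite $\omega\mapsto d_\Lambda(\omega_0,\omega)$ as a sum of two plurisubharmonic functions blowing up as $\omega$ leaves $\Omega$, and then invoke the Bremermann--Oka characterization of pseudo-convex open sets as those carrying a continuous psh exhaustion.

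For the subharmonicity, I would start from the explicit formula \eqref{eq:comput-crossratio}: the modulus $|[\phi,\phi',\omega_0,\omega]|$ factors into a ratio involving only $\phi$ and a ratio involving only $\phi'$, so taking logarithms and splitting the supremum gives
\begin{equation*}
d_\Lambda(\omega_0,\omega) \;=\; \max_{\phi\in\Lambda}\log\frac{|\bphi(\bomega_0)|}{|\bphi(\bomega)|} \;+\; \max_{\phi'\in\Lambda}\log\frac{|\bphi'(\bomega)|}{|\bphi'(\bomega_0)|}.
\end{equation*}
In a local affine chart with a holomorphic lift $\omega\mapsto\bomega$ and a continuous choice of representatives on the compact set $\Lambda$, admissibility makes $\omega\mapsto\bphi(\bomega)$ a nowhere-vanishing holomorphic function on $\Omega$, so $\omega\mapsto \log|\bphi(\bomega)|$ is pluriharmonic. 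Each summand above is therefore the supremum of a jointly continuous, compactly parametrized family of pluriharmonic functions, hence continuous and plurisubharmonic; the sum $d_\Lambda(\omega_0,\cdot)$ is thus psh on $\Omega$.

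For the exhaustion, I would take a sequence $(\omega_n)\subset\Omega$ with no accumulation point in $\Omega$. By compactness of $\P$ we may assume $\omega_n\to\omega_\infty\in\P\setminus\Omega$, and by saturation there is some $\phi_\infty\in\Lambda$ with $\bphi_\infty(\bomega_\infty)=0$. Picking any $\phi'\in\Lambda$ with $\bphi'(\bomega_\infty)\neq 0$ and bounding $d_\Lambda(\omega_0,\omega_n)$ from below by the single term $\log|[\phi_\infty,\phi',\omega_0,\omega_n]|$, the denominator $|\bphi_\infty(\bomega_n)|$ tends to $0$ while the numerator stays bounded away from $0$, forcing $d_\Lambda(\omega_0,\omega_n)\to +\infty$. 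The main obstacle is guaranteeing the existence of such a separating $\phi'$ at every boundary point: in degenerate configurations where \emph{every} form of $\Lambda$ happens to vanish at $\omega_\infty$, the above lower bound becomes indeterminate, and one would need either a mild non-degeneracy assumption on $\Lambda$ (for example that $\Lambda$ is not contained in the orthogonal of any point of $\P$) or a finer comparison of vanishing orders among elements of $\Lambda$. Once a continuous psh exhaustion is in hand, pseudo-convexity of $\Omega$ in every affine chart, hence of $\Omega$ itself, follows from the classical characterization.
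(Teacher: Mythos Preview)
Your argument is essentially the same as the paper's: both recognize that $\omega\mapsto\log\lvert[\phi,\phi',\omega_0,\omega]\rvert$ is plurisubharmonic (as the log-modulus of a nowhere-vanishing holomorphic function) and that a supremum over the compact parameter set $\Lambda\times\Lambda$ of such functions remains psh. Your Funk-type splitting into two single-parameter maxima is a mild refinement the paper does not carry out in this proof (though it alludes to this decomposition elsewhere); it makes the pluriharmonicity of the individual terms slightly more transparent but is not needed.

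On the exhaustion, your argument and the paper's are again the same: pick $\phi$ with $\bphi(\bomega_n)\to 0$ and a second form $\phi_0$ that does \emph{not} vanish at the limit, and watch one cross-ratio blow up. You are right to flag the degenerate case where every form of $\Lambda$ vanishes at $\omega_\infty$; the paper's proof glosses over this point just as yours does, simply writing ``fix a form $\phi_0\in\Lambda$'' without checking that $\bphi_0(\bomega)$ stays bounded away from zero. So the obstacle you identify is genuine, and your suggested non-degeneracy hypothesis (that $\Lambda$ not lie in the orthogonal of a single point) is exactly what is tacitly needed. In short, your proof is at least as complete as the paper's, and more honest about the missing hypothesis.
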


\begin{proof}
Fix a point $\omega_0$ in $\Omega$, and consider the function 
$F(\omega) = d_{\Lambda}(\omega_0,\omega)$.
This function is defined as the max on a compact set of functions 
$\ln(| [\phi,\phi',\omega_0,\omega]|)$. The cross-ratios are 
holomorphic functions of $\omega \in \Omega$. Hence the function 
$F$ is sub-harmonic. Moreover, if $\omega$ escapes any compact in 
$\Omega$, then there are forms $\phi$ in $\Lambda$ such that $\phi(\omega) \to 0$. 
Then, fix a form $\phi_0 \in \Lambda$: the 
cross-ratio $[\phi,\phi_0,\omega_0,\omega]$ goes to $\infty$ 
hence $F(\omega) \to +\infty$. Conclusion: $F$ is a subharmonic exhaustion 
of $\Omega$.
\end{proof}

\subsection{Separation condition}\label{subsec:separation}

We now explore when two points $\omega$ and $\omega'$ are separated 
by the metric $d_\Lambda$, meaning that $d_\Lambda (\omega,\omega')>0$.
Once $\omega$ and $\omega'$ are fixed, the cross-ratios $[\phi,\phi',\omega,\omega']$
are determined by the points $\phi_{\omega,\omega'}$ as in Definition \ref{def:crossratio}.
Let us define a notation for the set of these points:

\begin{definition}\label{def:projection}
 The set $\Lambda_{\omega,\omega'}$  of points $\phi_{\omega,\omega'}$
    for $\phi \in \Lambda$ is called the \emph{projection} of $\Lambda$ on 
    the line $(\omega,\omega')$.
\end{definition}

As stated in the following proposition, $d_\Lambda$ separates $\omega$ and $\omega'$
as soon as its projection in the complex line $(\omega,\omega')$ is not included in
a real line with $\omega$ and $\omega'$ complex conjugate w.r.t. this line. 

\begin{theorem}\label{thm:metric}
Let  $\omega,\omega'$ be two distinct points in $\Omega$. 
The following three conditions are equivalent.
\begin{enumerate}
 \item $d_\Lambda$ does not separate $\omega$ and $\omega'$.
 \item For all pairs $(\phi,\phi')$ in $\Lambda\times\Lambda$, $\|[\phi,\phi',\omega,\omega']\|=1$.
 \item There exists an anti-holomorphic involution of the complex line 
 $(\omega,\omega')$ which exchange $\omega$ 
and $\omega'$, and fixes pointwise the projection $\Lambda{\omega,\omega'}$.
 \end{enumerate}
Moreover, if $d_\Lambda$ separates each pair of distinct points, then $d_\Lambda$ is a metric.
\end{theorem}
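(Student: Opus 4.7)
The strategy is to establish the cyclic equivalence $(1) \Leftrightarrow (2) \Leftrightarrow (3)$, and then note that the final sentence is an immediate consequence of the earlier proposition that $d_\Lambda$ is a pseudo-metric.

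For $(1)\Leftrightarrow(2)$, the key observation is that swapping $\phi$ and $\phi'$ in formula \eqref{eq:comput-crossratio} inverts the cross-ratio, hence inverts its modulus. Therefore the set of values $\{|[\phi,\phi',\omega,\omega']| : \phi,\phi' \in \Lambda\}$ is stable under $x \mapsto 1/x$. Since $\Lambda$ is compact and the cross-ratio is a continuous function of the lifts, this set is compact, its maximum is attained, is necessarily $\geq 1$, and equals $1$ if and only if every value in the set equals $1$. Combined with the definition $d_\Lambda(\omega,\omega') = \ln\max\{\,\cdot\,\}$, this is exactly the equivalence $(1)\Leftrightarrow(2)$.

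For $(2) \Leftrightarrow (3)$, the plan is to work in a well chosen chart of the complex line $L = (\omega\omega')$. Choose an affine coordinate $z$ on $L \cong \CP$ placing $\omega$ at $0$ and $\omega'$ at $\infty$. By admissibility the projection $\phi_{\omega,\omega'}$ lies in $L \setminus \{\omega,\omega'\}$, so we can write its coordinate as $z_\phi \in \C^*$. A direct computation from \eqref{eq:usual-crossratio} yields
\begin{equation*}
[\phi,\phi',\omega,\omega'] \;=\; [z_\phi,z_{\phi'},0,\infty] \;=\; \frac{z_\phi}{z_{\phi'}}.
\end{equation*}
Hence $(2)$ is equivalent to $|z_\phi| = |z_{\phi'}|$ for all $\phi,\phi' \in \Lambda$, i.e.\ to $\Lambda_{\omega,\omega'}$ being contained in a circle $\{|z| = r\}$ for some $r > 0$. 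To conclude, classify the anti-holomorphic involutions of $\CP$ exchanging $0$ and $\infty$: writing such a map as $z \mapsto (a\bar z + b)/(c\bar z + d)$, the conditions force it to have the form $z \mapsto \alpha/\bar z$ with $\alpha \in \R^*$ (the reality of $\alpha$ coming from the involution condition). Its fixed-point set is $\{|z|^2 = \alpha\}$, which is a circle exactly when $\alpha > 0$. Thus $(3)$ is equivalent to $\Lambda_{\omega,\omega'}$ lying on a circle of constant modulus in this chart, matching $(2)$.

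Finally, if $d_\Lambda$ separates every pair of distinct points, then non-negativity, symmetry and the triangle inequality are already provided by the preceding proposition, so $d_\Lambda$ is automatically a genuine metric. The main obstacle in the argument is the geometric step $(2) \Leftrightarrow (3)$: one must pair the algebraic modulus-$1$ condition on cross-ratios with the classification of anti-holomorphic involutions of $\CP$ fixing two specified points, and check that ``fixing pointwise'' is exactly the circle condition. Once the chart with $\omega = 0,\ \omega' = \infty$ is fixed, both conditions collapse to the same statement that $\Lambda_{\omega,\omega'}$ lies on a circle centered at the origin, and the equivalence becomes transparent.
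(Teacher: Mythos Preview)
Your proof is correct and follows essentially the same strategy as the paper: the equivalence $(1)\Leftrightarrow(2)$ via the inversion symmetry of the cross-ratio, and $(2)\Leftrightarrow(3)$ by passing to the chart $\omega=0$, $\omega'=\infty$ on the line $(\omega\omega')$ where the cross-ratio becomes $z_\phi/z_{\phi'}$ and condition~(2) reduces to $\Lambda_{\omega,\omega'}$ lying on a circle centered at the origin. The only stylistic difference is in how $(3)\Rightarrow(2)$ is handled: the paper changes coordinates a second time so that the given involution becomes complex conjugation (hence $\Lambda_{\omega,\omega'}\subset\RP$ and $\omega'=\bar\omega$), and then computes the cross-ratio directly; you instead stay in the chart $\omega=0$, $\omega'=\infty$ and classify all anti-holomorphic involutions swapping $0$ and $\infty$ as $z\mapsto\alpha/\bar z$ with $\alpha\in\R^*$, reading off the fixed circle. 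Both arguments are equally short; yours has the minor advantage of treating the two directions of $(2)\Leftrightarrow(3)$ uniformly in a single chart.
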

For an admissible pair $(\Omega,\Lambda)$, if $d_\Lambda$ is a metric, we will say that $\Lambda$ 
is \emph{separating}.

\begin{proof}
The first two items of the equivalence are clearly equivalent. Choosing a coordinate on the 
line $(\omega,\omega')$ such that 
$\omega=0$ and $\omega'=\infty$, we see that the condition 
$\|[\phi,\phi',\omega,\omega']\|=1$ is equivalent to
the fact that the two points $\phi_{\omega,\omega'}$ and 
$\phi'_{\omega,\omega'}$ lie on a same circle centered at 
$0$. So if every cross-ratio has modulus one, the whole projection 
$\Lambda_{\omega,\omega'}$ is include in this circle. 
We may assume, up to a change of coordinate, that this circle
is the unit circle. 
The reflection $z \to \frac{1}{\bar z}$ about this circle is an anti-holomorphic 
involution which fixes pointwise $\Lambda_{\omega,\omega'}$, and exchanges 
$\omega = 0$ and $\omega' = \infty$. 

Conversely, suppose that  an anti-holomorphic involution $\sigma$ fixes the projection
$\Lambda_{\omega,\omega'}$ and $\sigma(\omega) = \omega'$. Then, 
up to a change of coordinates, $\sigma$ is the
complex conjugation, $\Lambda_{\omega,\omega'}$ is included in the real line
$\RP$ and $\omega' = \bar \omega$. Then, every cross-ratio has modulus one:
\[ [\phi_{\omega,\omega'},\phi'_{\omega,\omega'},\omega,\omega'] = 
\frac{(\phi_{\omega,\omega'} - \omega)(\phi'_{\omega,\omega'}-\omega')}{\phi_{\omega,\omega'} - \omega')(\phi'_{\omega,\omega'}-\omega)} = 
\frac{\overline{(\phi_{\omega,\omega'} - \omega')(\phi'_{\omega,\omega'}-\omega)}}{(\phi_{\omega,\omega'} - \omega')(\phi'_{\omega,\omega'}-\omega)} \]
 
 This proves the equivalence. The last sentence is straightforward: the separation was the
 only property lacking to $d_\Lambda$ to be a metric.
\end{proof}

\begin{remark}\label{remark:zariski}We shall give examples where this condition holds. In fact, it is not as hard to check as it may seem.
Indeed, the equivalence implies that if $\Lambda$
fails to separate two points, then it is included in a $\R$-Zariski closed subset of $\P'$. Indeed, in this case, the equation $\|[\phi,\phi',\omega,\omega']\|=1$ should be valid for every $\phi$ for any fixed $\phi'$.  The intersection of all solutions for varying $\phi'$ is a $\R-$Zariski closed subset of $\P'$.

In
$\CP$, any subset $\Lambda$ which is not included in a circle defines an actual metric on its
complement $\Omega$.  So this metric on a $3$-punctured sphere is never separating.
Still, we may often 
decide whether $\Lambda$ separates or not, even in higher dimensions, and especially in the case of limit sets. 
In Section \ref{section:CR}, we will interpret this separation condition geometrically 
in the context of spherical CR structures. We will then give non trivial examples of separating 
sets $\Lambda$. 
\end{remark}

\begin{remark}
As we have already noted, if $\Omega$ is an open proper convex subset of $\P(\R^{n+1})$, then
we take $\Lambda$ to be the closure of its dual convex set. In this case, $d_\Lambda$ is the usual
Hilbert metric, up to a factor $\frac 12$. An intriguing remark is that we may take a smaller $\Lambda$,
and define a metric on a disconnected set $\Omega$. For example, in the plane, take 
$\Lambda = \{\phi_1,\phi_2,\phi_3\}$ to be  three forms (not in the same line). Then each component of the complement of the three lines in the projective plane is a triangle. On each triangle,
$d_\Lambda$ is the usual Hilbert metric (see \cite{delaHarpe} for a beautiful study of these), but
then the distance between two points in different triangles is also defined. We will come
back to this remark in section \ref{section:RP1} in the seemingly trivial case of the real line $\RP$.
\end{remark}

\subsection{An infinitesimal symmetric Finsler metric}\label{Finsler metric}

We want to explore the infinitesimal behavior of the metric $d_\Lambda$. Once again,
the real case is the source of inspiration, where it is known that the Hilbert metric is
a Finsler geometry \cite{Troyanov} and this Finsler geometry is an object of study. Beware
that in this Hilbert geometry setting, the notion of Finsler metric is not as smooth
as in other parts of the literature: a Finsler metric on $\Omega$, for our purpose, is
a continuous function on $T\Omega$ which is a norm in each tangent space $T_\omega\Omega$.

In our case the situation is a bit more intricate than in the real case. 
We show in this section that $d_\Lambda$ does indeed define a Finsler metric, and are able to 
compute it. But, the presence of the $\max$ in the definition of $d_\Lambda$ and
the lack of regularity of our $\Lambda$ in examples we consider interesting prevent
any smoothness. Moreover, $(\Omega,d_\Lambda)$ is not in general a length space:
the metric $d_\Lambda$ is not the infimum length of a smooth path between two points. 
We will nonetheless be able to understand when the unit ball for the norms are strictly convex.

\newcommand{\norm}[2]{\|#2\|_{\Lambda,#1}}

We begin by computing the infinitesimal behavior of $d_\Lambda$ to define the Finsler metric. We have to parametrize a tangent space $T_\omega\Omega$. To do that, we choose a lift $\bomega$ and identify 
$T_\omega\Omega$ with a subspace $T\in \C^{n+1}$
transverse to the line $\omega$.
\begin{theorem}\label{theorem:metric}
Let $(\Omega,\Lambda)$ be an admissible pair, with $\Lambda$ separating. 
Then the metric $d_\Lambda$ yields a symmetric Finsler metric $(\omega,v) \to \norm{\omega}{v}$ on $T\Omega$, which is given for $T_\omega\Omega$ and $v\in T$ by:
\begin{equation}\label{formula:Finsler}
 \norm{\omega}{v} = \max_{\phi, \phi' \in \Lambda} 
\: \Re\left(\frac{\bphi'(v)}{\bphi'(\bomega)} - \frac{\bphi(v)}{\bphi(\bomega)}\right)
\end{equation}
\end{theorem}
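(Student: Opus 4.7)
The plan is to compute directly the right-derivative of $d_\Lambda$ along a straight curve from $\omega$ in direction $v$, recognize it as the formula (\ref{formula:Finsler}), and then verify that the formula defines a symmetric norm depending continuously on $\omega$. First I would fix a lift $\bomega$ and identify $T_\omega\Omega$ with a transverse complex subspace $T \subset \C^{n+1}$. For $v \in T$, let $\omega_t \in \Omega$ be the projectivization of $\bomega + t v$ for small real $t$. For every pair $(\phi, \phi') \in \Lambda^2$, the formula (\ref{eq:comput-crossratio}) gives
\begin{equation*}
[\phi, \phi', \omega, \omega_t] \;=\; \frac{\bphi(\bomega)\,\bphi'(\bomega + tv)}{\bphi(\bomega + tv)\,\bphi'(\bomega)} \;=\; \frac{1 + t\, \bphi'(v)/\bphi'(\bomega)}{1 + t\, \bphi(v)/\bphi(\bomega)},
\end{equation*}
and taking $\ln|\cdot|$ and Taylor expanding yields
\begin{equation*}
\ln\bigl|[\phi, \phi', \omega, \omega_t]\bigr| \;=\; t\, f_{\phi, \phi'}(\omega, v) + O(t^2),
\end{equation*}
with $f_{\phi, \phi'}(\omega, v) := \Re\bigl(\bphi'(v)/\bphi'(\bomega) - \bphi(v)/\bphi(\bomega)\bigr)$. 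The compactness of $\Lambda$, the admissibility hypothesis $\bphi(\bomega) \neq 0$ on $\Lambda$, and the smoothness of the ratios provide a uniform bound on the $O(t^2)$ remainder.

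Next, taking the maximum over $\Lambda^2$ of both sides commutes with this uniform remainder, so dividing by $t > 0$ yields
\begin{equation*}
\frac{d_\Lambda(\omega, \omega_t)}{t} \;=\; \max_{(\phi, \phi') \in \Lambda^2} f_{\phi, \phi'}(\omega, v) + O(t),
\end{equation*}
and the right-hand side of (\ref{formula:Finsler}) is precisely $\lim_{t \to 0^+} d_\Lambda(\omega, \omega_t)/t$. The formula is invariant under rescaling of the lifts $\bphi$ and $\bomega$, and a standard argument (two smooth curves through $\omega$ with the same tangent $v$ differ by $O(t^2)$ and are absorbed in the remainder) shows the limit depends only on $v \in T_\omega\Omega$.

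It then remains to verify the Finsler axioms of $v \mapsto \norm{\omega}{v}$. Each $f_{\phi, \phi'}(\omega, \cdot)$ is $\R$-linear, so the formula is a pointwise maximum over a compact family of $\R$-linear functionals, hence convex (therefore sublinear), positively homogeneous, and jointly continuous in $(\omega, v)$. Symmetry $\norm{\omega}{-v} = \norm{\omega}{v}$ follows because the family $\{f_{\phi, \phi'}\}$ is stable under the swap $(\phi, \phi') \leftrightarrow (\phi', \phi)$, which sends each $f_{\phi, \phi'}$ to its opposite; combined with positive homogeneity this gives $\R$-homogeneity, and sublinearity together with symmetry gives the triangle inequality.

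The main obstacle is non-degeneracy. The vanishing $\norm{\omega}{v} = 0$, combined with the symmetry above, forces $\phi \mapsto \Re(\bphi(v)/\bphi(\bomega))$ to be constant on $\Lambda$. Interpreted on the complex line $(\omega, \omega + v)$, this constrains the projection $\Lambda_{\omega, \omega+v}$ of Definition \ref{def:projection} to lie on a specific circle (or real line) through $\omega$. For $v \notin \C \bomega$, I would derive a contradiction from this rigid configuration via the equivalences of Theorem \ref{thm:metric}: the anti-holomorphic involutions permitted by such a circle configuration should be incompatible with the separating hypothesis on $\Lambda$, and closing this last step is the delicate part of the argument.
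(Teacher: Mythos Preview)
Your derivation of the formula \eqref{formula:Finsler} via the first–order Taylor expansion of $\ln|[\phi,\phi',\omega,\omega+tv]|$, together with the uniform $O(t^2)$ remainder coming from compactness of $\Lambda$, is exactly the paper's approach; your uniform remainder plays the role of the paper's technical lemma on swapping $\max$ and the derivative. The verification that $\norm{\omega}{\cdot}$ is convex, symmetric, positively homogeneous and continuous also matches the paper (there phrased as: the sublevel set is an intersection of half-spaces).

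The non-degeneracy step is where you and the paper diverge. The paper argues more directly: since $\Lambda$ separates $\omega$ and $\omega+tv$ for every small $t>0$, the modulus of the cross-ratio is not identically $1$, and ``this implies'' some $\Re(\ldots)\neq 0$. You are right to flag this as delicate---that implication is not automatic, since the cross-ratio moduli could in principle differ from $1$ only at second order in $t$ while all first-order terms vanish. Your proposed route through the circle characterisation of Theorem~\ref{thm:metric} is the correct repair, but it does not apply to the pair $(\omega,\omega+v)$ itself: as you note, the projection $\Lambda_{\omega,\omega+v}$ lies on a circle $C$ passing \emph{through} $\omega$, and the reflection in $C$ therefore fixes $\omega$ rather than swapping it with $\omega+v$. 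The missing move is to change the pair. Pick $p$ on the complex line $(\omega,\omega+v)$ close to $\omega$ but off $C$, and let $q$ be its image under the anti-holomorphic reflection in $C$; for $p$ sufficiently near $\omega$ both $p$ and $q$ lie in $\Omega$, one has $p\neq q$, and the reflection swaps $p,q$ while fixing $\Lambda_{p,q}=\Lambda_{\omega,\omega+v}$ pointwise. Theorem~\ref{thm:metric} then forces $d_\Lambda(p,q)=0$, contradicting the separating hypothesis. With this adjustment your argument closes cleanly.
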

Note that this formula is actually independent of the choice of lifts $\bphi$, $\bphi'$. Moreover,
multiplying the lift $\bomega$ by some $r \in \C$ amounts to changing the parametrization of 
$T_\omega\Omega$ by $T$. Hence this formula is indeed defined on $T_\omega\Omega$. Another
point worth noting is that formula actually separates the contribution of $\phi$ and $\phi'$,
as in the usual way to pass from a Funk metric to a Finsler metric \cite{PapadopoulosTroyanov}.

\begin{proof}
Let $\omega$ be a point in $\Omega$, with $\bomega \in \C^{n+1}$ a lift. We identify 
$T_{\omega} \Omega$ with a hyperplane 
$T \subset \C^{n+1}$ transverse to the line generated by $m$. 
For $v\in T$ and $t>0$, we will prove that the first
order term in the Taylor expansion of $d_\Lambda(\omega,\omega+tv)$ as 
$t\rightarrow 0$ defines a norm on the tangent space 
at $\omega$. Let us first fix $\phi$ and $\phi'$ two points 
in  $\Lambda$ and $\bphi$ and $\bphi'$ corresponding linear forms. 
We first expand the cross-ratio at first order:
\begin{eqnarray*}
[\phi,\phi',\omega,\omega+tv] = & 
        \dfrac{\bphi(\bomega)\bphi'(\bomega+tv)}{\bphi'(\bomega)\bphi(\bomega+tv)}\\
      = & \dfrac{1+t\frac{\bphi'(v)}{\bphi'(\bomega)}}{1+t\frac{\bphi(v)}{\bphi(\bomega)}}\\
      = & 1 + t \left(\dfrac{\bphi'(v)}{\bphi'(\bomega)} - \dfrac{\bphi(v)}{\bphi(\bomega)}\right) + o(t)
\end{eqnarray*}

Since $\ln(|1+tz|) = \frac 12\log(|1+tz|^2)=\frac 12 \log(1 + t(z+\bar z)+o(t)) = t \Re(z) +o(t)$, we may 
further compute:
$$
\frac{d}{dt}_{|t=0+} \ln\left(\left|[\phi,\phi',\omega,\omega+tv]\right|\right) = 
\Re\left(\frac{\bphi'(v)}{\bphi'(\bomega)} - \frac{\bphi(v)}{\bphi(\bomega)}\right).
$$

The metric is given by $d_{\Lambda} (\omega,\omega+tv)= 
\max_{\phi, \phi' \in \Lambda} \ln\left(\left|[\phi,\phi',\omega,\omega+tv]\right|\right)$. Every 
function appearing in the $\max$ equals $0$ for $t=0$. Lemma \ref{lem:derive} below tells us that we may swap the max 
and the derivative. This gives us the announced expression for $\| \cdot \|_\Lambda$:

\begin{eqnarray}
\norm{\omega}{v} & = & \frac{d}{dt}_{|t=0+} d_\Lambda(\omega,\omega+tv)\nonumber\\
&  = & \max_{\phi, \phi' \in \Lambda} \Re\left(\frac{\bphi'(v)}{\bphi'(\bomega)} - \frac{\bphi(v)}{\bphi(\omega)}\right).\nonumber
\end{eqnarray}
Note that since we are taking the maximum over all pairs $(\phi,\phi')$, 
the quantity is positive: exchanging $\phi$ and $\phi'$ just changes the sign.

To prove that $\| \cdot \|_\Lambda$ defines a symmetric Finsler metric, we need to show that, in each tangent space, the sublevel set 
$B_\Lambda^{\omega}=\{ v\in T_{\omega}\Omega,\norm{\omega}{v}\leqslant 1\}$ is compact, convex and symmetric. This sublevel set may be written as the intersection

\begin{equation}\label{eq:unit-ball}
\underset{\phi,\phi'\in\Lambda}{\bigcap}\Bigl\{ \Re\left(\frac{\bphi'(v)}{\bphi'(\bomega)} - \frac{\bphi(v)}{\bphi(\bomega)}\right)\leqslant 1\Bigr\}.
\end{equation}
We see from \eqref{eq:unit-ball} that $B_\Lambda^{\omega}$ is convex as an intersection of half-spaces, and symmetric : if the max in \eqref{formula:Finsler} for a vector $v$ is obtained for a pair $(\phi,\phi')$, then $(\phi',\phi)$ realizes the max for $-v$.

The last point to verify is the compactness. Closedness follows from \eqref{eq:unit-ball}. 
For any $t>0$, $\Lambda$ is separates the points $\omega$ and $\omega+tv$. 
Therefore, the module of the cross-ratio 
$[\phi,\phi',\omega,\omega+tv]$ is not identically $1$ for any $\phi$, $\phi'$ in $\Lambda$. This implies the existence of 
$\phi$ et $\phi'$ such that $\Re\left(\frac{\bphi'(v)}{\bphi'(\bomega)} - \frac{\bphi(v)}{\bphi(\bomega)}\right)\neq 0$. In other 
terms, for any $v$ we have $\norm{\omega}{v}>0$. 

We conclude by contradiction: suppose $B_\Lambda^{\omega}$ is not bounded. We would have a sequence $v_n$ of tangent 
vectors at $\omega$ such that $|v_n|\rightarrow \infty$ (where $|\cdot|$ is any norm on 
$T_\omega\Omega$), and
$$\forall (\phi,\phi')\in \Lambda\times\Lambda,\,
\Re\left(\frac{\bphi'(v_n)}{\bphi'(\bomega)} - \frac{\bphi(v_n)}{\bphi(\bomega)}\right)\leqslant 1.$$
In particular, up to extraction, the sequence $v_n/|v_n|$ converges to a vector $v$ that satisfies 
$|v|=1$ and 
$\Re\left(\frac{\bphi'(v)}{\bphi'(\bomega)} - \frac{\bphi(v)}{\bphi(\bomega)}\right) = 0$, which is a contradiction.
\end{proof}

We now state the technical lemma used in the proof.
\begin{lemma}\label{lem:derive}
Let $\mathcal F$ be a bounded set of $C^2$-functions from $\R_+$ to $\R$ vanishing at $0$.
Let $f$ be defined by $f(t) = \max_{g\in\mathcal F} g(t)$ for $t\geq 0$. 
Then $f$ has a derivative at $0$ given by $f'(0) = \max_{g\in\mathcal F} g'(0)$.
\end{lemma}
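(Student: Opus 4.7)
The idea is that under a uniform $C^2$-boundedness hypothesis we get a Taylor expansion at $0$ with a uniform remainder estimate, so the $\max$ over $\mathcal F$ and the derivative at $0$ can be exchanged.

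Concretely, I would interpret ``bounded set of $C^2$-functions'' as supplying a constant $M$ with $\sup_{g\in\mathcal F}\sup_{s\in[0,\delta]}|g''(s)| \leq M$ on some neighborhood $[0,\delta]$ of $0$ (this is exactly what the application provides, since the cross-ratio is a smooth function of $(\phi,\phi')\in\Lambda\times\Lambda$ compact). Since $g(0)=0$, Taylor's formula with integral remainder then gives the uniform bound
\[
\left| g(t) - g'(0)\,t \right| \leq \tfrac{M}{2}\,t^2, \qquad \forall g\in \mathcal F,\ t\in[0,\delta].
\]
Equivalently, dividing by $t>0$,
\[
\left|\frac{g(t)}{t} - g'(0)\right| \leq \frac{M}{2}\,t.
\]

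Set $L = \sup_{g\in\mathcal F} g'(0)$ (finite by the same boundedness hypothesis; in the application, compactness of $\Lambda\times\Lambda$ and continuity of $g\mapsto g'(0)$ make this a genuine max). Taking the supremum over $g\in\mathcal F$ in the inequality $g(t)/t \leq g'(0) + (M/2)t$ yields $f(t)/t \leq L + (M/2)t$. For the reverse inequality, for every $\varepsilon>0$ I would pick $g_\varepsilon\in\mathcal F$ with $g_\varepsilon'(0)\geq L-\varepsilon$ and use $f(t)\geq g_\varepsilon(t)\geq (L-\varepsilon)t - (M/2)t^2$, giving $\liminf_{t\to 0^+} f(t)/t \geq L-\varepsilon$. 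Letting $\varepsilon\to 0$ and combining with the upper bound gives $\lim_{t\to 0^+} f(t)/t = L$, which since $f(0)=0$ is exactly $f'(0)=L$.

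There is no real obstacle here beyond making the $C^2$-boundedness hypothesis precise enough to get the uniform quadratic remainder; the core observation is the swap
\[
\lim_{t\to 0^+}\max_{g\in\mathcal F}\frac{g(t)}{t} \;=\; \max_{g\in\mathcal F}\lim_{t\to 0^+}\frac{g(t)}{t},
\]
which is legitimate precisely because the family $\{t\mapsto g(t)/t\}_{g\in\mathcal F}$ converges uniformly on $(0,\delta]$. Once this is in place, everything is a two-line argument.
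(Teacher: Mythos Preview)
Your proof is correct and follows essentially the same approach as the paper's: both obtain a uniform estimate $|g(t)/t - g'(0)| \leq Ct$ from a second-order Taylor expansion with the $C^2$-boundedness hypothesis, then pass the $\max$ through. The only cosmetic differences are that you use the integral remainder (yielding the explicit constant $M/2$) where the paper uses the Peano form, and you handle the lower bound via an $\varepsilon$-argument rather than taking $\max$ directly on both sides of the two-sided inequality.
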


\begin{proof}

Observe first that because $t>0$, 
$$\frac{f(t)}{t} = \frac{\max_{g\in\mathcal F} g(t)}{t}=\max_{g\in\mathcal F} \frac{g(t)}{t}.$$
Consider next a function $g$ in $\mathcal{F}$. The second order expansion of $g$ gives 

$$\dfrac{g(t)}{t}=g'(0)+t\Bigl(\dfrac{g''(0)}{2}+\varepsilon_g(t)\Bigr),$$
where  $\varepsilon_g$ is a continuous function depending on $g$, such that 
$\varepsilon_g(t)\underset{t\rightarrow 0^+}\longrightarrow 0$. The boundedness of $\mathcal{F}$ implies that the two 
sets $\{|\varepsilon_g(t)|,t\in[0,1],g\in\mathcal{F}\}$ and $\{|g''(0)|,g\in\mathcal{F}\}$ are bounded. In turn, there 
exists a constant $C>$ such that

$$
\forall g\in\mathcal{F},\, \forall t\in[0,1],\,
g'(0) -C t \leq \frac{g(t)}{t} \leq g'(0) + Ct.
$$
We obtain therefore 
$$
\max_{\mathcal F}g'(0) -C t \leq \max_{\mathcal F} \frac{g(t)}{t} \leq \max_{\mathcal F} g'(0) + Ct.
$$
This implies $\max_{\mathcal F} \frac{g(t)}{t} \to \max_{\mathcal F} g'(0)$.
\end{proof}

We give here a condition for the unit balls of the Finsler metric to be strictly convex. Recall 
that in the real case, it amounts to the condition that the boundary of $\Omega$ contains no segment.
The condition we give here is the same, translated by duality.

As before, for $\omega \in \Omega$ we choose a lift $\bomega$ and identify $T_\omega\Omega$ to $T\subset \C^{n+1}$. We denote by 
$$\Psi = \left\{ \frac{\bphi'}{\bphi'(\bomega)} - \frac{\bphi}{\bphi(\bomega)}\quad \textrm{where} \quad \phi,\phi' \in \Lambda \right\}.$$

From Theorem \ref{theorem:metric} we know that for all $v\in T$, 
$$\norm{\omega}{v} = \underset{\psi\in \Psi}{\max} \: \Re \left(\psi(v)\right).$$

\begin{proposition}\label{prop:Minkovski}
Let $(\Omega,\Lambda)$ be an admissible pair, with $\Lambda$ separating and $\omega \in \Omega$. 
The unit ball of the Finsler metric $\norm{\omega}{\cdot}$ is strictly convex if and only if 
for any pair of distinct tangent vectors $u$ and $v$ at $\omega$, the $\max$ 
defining $\norm{\omega}{u}$ and $\norm{\omega}{v}$ are not obtained for the same form $\psi \in \Psi$.
\end{proposition}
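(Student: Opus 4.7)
My plan is to exploit the description \eqref{eq:unit-ball} of the unit ball $B_\Lambda^\omega$ as the intersection over $\psi\in\Psi$ of the closed real half-spaces $\{v:\Re(\psi(v))\leq 1\}$, and then run the standard equality-case argument for strict convexity of an intersection of half-spaces, adapted to our (compact but typically non-discrete) family $\Psi$.

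First I would check that $\Psi$ is compact in $(\C^{n+1})^\vee$. Indeed, the quotient $\bphi/\bphi(\bomega)$ depends only on $\phi\in\Lambda$ (not on the chosen lift) and, by admissibility, is a continuous function of $\phi$; composing with $\Lambda\times\Lambda\to(\C^{n+1})^\vee$, $(\phi,\phi')\mapsto \bphi'/\bphi'(\bomega)-\bphi/\bphi(\bomega)$, and using compactness of $\Lambda\times\Lambda$ yields compactness of $\Psi$. In particular, for each $v$ the maximum in $\norm{\omega}{v}=\max_\psi\Re(\psi(v))$ is attained, so it makes sense to speak of ``the form $\psi$ realizing the max''. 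Note also that the condition in the proposition should be tested on \emph{distinct unit vectors}, since two positively proportional vectors trivially share the same set of maximizers; equivalently, one tests pairs of distinct points of $\partial B_\Lambda^\omega$.

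For the implication ``condition $\Rightarrow$ strict convexity'', I would assume strict convexity fails and produce a common maximizer. If there are distinct unit vectors $u,v$ with $\norm{\omega}{(u+v)/2}=1$, pick $\psi\in\Psi$ realizing this value. Then $\Re(\psi(u))+\Re(\psi(v))=2$, while each summand is bounded above by $\norm{\omega}{u}=\norm{\omega}{v}=1$; hence $\Re(\psi(u))=\Re(\psi(v))=1$, so $\psi$ achieves the max at both $u$ and $v$, contradicting the hypothesis. Conversely, starting from a common maximizer $\psi$ of two distinct unit vectors $u,v$, the map $\Re\circ\psi$ is $\R$-affine and equal to $1$ at both endpoints, so it equals $1$ identically on the segment $[u,v]$; combined with the upper bound $\norm{\omega}{\cdot}\leq 1$ valid along $[u,v]$ by convexity of $B_\Lambda^\omega$, the whole segment lies on $\partial B_\Lambda^\omega$, which contradicts strict convexity.

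The only mildly non-routine step is securing compactness of $\Psi$ so that the maxima are genuinely attained and the equality case of the triangle inequality can be read off a single form $\psi$; once this is in place, both directions are immediate from the linearity of $\Re\circ\psi$ and the saturation case of the half-space inequality.
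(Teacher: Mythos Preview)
Your proof is correct and in fact more complete than the paper's. The paper only writes out the implication ``no common maximizer $\Rightarrow$ strict convexity'', and does so by a direct argument: for each fixed $\psi$ one has the strict inequality $\Re\bigl(\psi((u+v)/2)\bigr) < \tfrac12\bigl(f(u)+f(v)\bigr)$, and compactness of $\Psi$ then lets the strict inequality survive the maximum. You establish the same implication by contrapositive instead, picking a $\psi$ that realises the maximum at the midpoint of a flat segment and forcing it to realise the maximum at both endpoints; this is equally short and perhaps cleaner. You also supply the converse direction, which the paper omits, and you make explicit two points the paper glosses over: the compactness of $\Psi$ (only asserted in the paper's proof), and the necessary restriction of the hypothesis to distinct \emph{unit} vectors---without which the condition as literally stated is never satisfied, since any $u$ and $2u$ share the same set of maximizers. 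All of these are genuine improvements over the paper's write-up.
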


\begin{proof}

For simplicity, we denote by $f(v)=\norm{\omega}{v} = \max_{\psi\in \Psi} \Re(\psi(v))$.
Note that $f$ is positively homogeneous : $f(\lambda v)=\lambda f(v)$ for all $\lambda \in\R_+$ 
and $v\in T$.

Suppose now that for each  $\psi \in\Psi$, at least one of the terms $\Re(\psi(u))$ and $\Re(\psi(v))$ is not the maximum over all $\psi$'s in $\Psi$. Then 
\begin{eqnarray}
\Re\left(\psi\left(\frac{u+v}{2}\right)\right) & = & \frac{\Re(\psi(u)) + \Re(\psi(v))}{2}\nonumber\\ 
 & < & \frac{\max_{\psi\in\Psi}(\Re(\psi(u))) + \max_{\psi\in\Psi}(\Re(\psi(v)))}{2} \nonumber\\
& =  &\frac{f(u)+f(v)}{2}.\label{ineq-convex} 
\end{eqnarray}
 Since $\Psi$ is compact, we obtain by taking the maximum the strict inequality
$$f\left(\frac{u+v}{2}\right)  < \frac{f(u)+f(v)}{2},$$
which amounts to the strict convexity of balls.
\end{proof}

\begin{remark}
Examples of separating $\Lambda$ for which the balls are not strictly convex are easily built
using this proposition: any finite subset $\Lambda \in \CP$ not included in a circle separates points
in its complement. But from the previous proposition, balls are not strictly convex: indeed, they are
polygons.
\end{remark}

\begin{remark}\label{rem:complexhyperbolicspace}
Let $\C^{n,1}$ be  $\C^{n+1}$ equipped with the Hermitian form
$$
\langle Z, W \rangle=Z_0\overline{W}_{n}+Z_2\overline{W}_2+\cdots Z_{n-1}\overline{W}_{n-1}+Z_{n}\overline{W}_0.
$$
One has three subspaces:
$$
  V_{+}=  \{ Z\in \C^{n,1} : \langle Z, Z \rangle >0 \},
$$
$$
   V_{0}=  \{ Z\in \C^{n,1}- \{0\} : \langle Z, Z \rangle =0 \},
$$
$$
V_{-}=  \{ Z\in \C^{n,1} : \langle Z, Z \rangle < 0 \}.
$$
Let $P : \C^{n,1}- \{0\} \rightarrow \P(\C^{n+1}) $
be the canonical projection onto complex projective space.
Then  complex hyperbolic n-space is defined as $\H^n_{\C}=P(V_{-})$ equipped with the Bergman metric.
The boundary of complex hyperbolic space is defined as $\partial \H^n_{\C}=P(V_{0})$.
Using the hermitian form, we identify $\partial \H^n_{\C}$ as a subset of the dual $\P((\C^{n+1})^\vee)$.

Then the pair $(\H^n_{\C},\partial \H^n_{\C})$ is admissible and separates points. Moreover, from Proposition \ref{prop:invariance}, the Hilbert metric $d_{\partial \H^n_{\C}}$ is invariant under the action of unitary group $\mathrm{U}(n,1)$ of the Hermitian form. Hence this metric is a multiple of the Bergman metric (one may compute that the multiplication factor is $\frac 12$). We will come back to complex hyperbolic geometry in the following section.
\end{remark}

\begin{remark}\label{shilov}
More generally, let $\Omega$ be a domain in $\C^n$.  There are several situations where a natural metric can be associated to it.  A very general 
definition is that of the Bergman metric on any bounded domain.  The construction is such that the biholomorphisms group is contained in the isometry group of that metric.
The particular case where $\Omega$ is a bounded homogeneous domain has been studied for a long time.  It contains the important class of non-compact hermitian symmetric spaces.  These Riemannian spaces, classified by Cartan, can be embedded as 
bounded domains which contain the origin, are stable under the circle action  and which turn out to be convex (see \cite{wolf} for a thorough exposition).   

The group of biholomorphisms of a bounded symmetric domain is transitive and can be extended to the boundary but its action on the boundary is not transitive except in the case of the complex ball.  On the other hand the isotropy group at the origin acts
by linear maps of $\C^n$ and, moreover, it acts transitively on the Shilov boundary of $\Omega$.
Consider the set $\Lambda$ of all hyperplanes tangent to the boundary which touch it in at least two points.  They all pass by the Shilov boundary
and therefore the action of the isotropy preserves $\Lambda$.
The distance $d_\Lambda(0,x)$ is therefore invariant under the isotropy group and, as the action of the isotropy is irreducible, 
$d_\Lambda(0,x)$ coincides up to a scalar with the Bergman metric.  One can then translate this distance the whole domain using the action of the automorphism group.

 As an example,
 consider the bidisc $\Delta\times \Delta \subset \C^2$.   Its Shilov boundary is $S^1\times S^1$ and  the relevant hyperplanes 
 passing through it are of the form $z=z_0\in S^1$ or $w=w_0\in S^1$ where $(z_0,w_0)\in S^1\times S^1$ and $(z,w)$ are coordinates of $\C^2$.  We recuperate then the Bergman metric $\max \{ d_h(x_1,x_2), d_h(y_1,y_2)\}$ for any two points
$(x_1,y_1),(x_2,y_2)\in \Delta\times \Delta$.

\end{remark}

The expository and general discussion of this paper is over. We now begin to explore three different directions, illustrating the wealth of possible applications of the definition of this metric. 
We first reinterpret geometrically the definitions in the situation of a 
discrete subgroup of $\PU(n,1)$. We begin by 
recalling the definition of the Kulkarni limit set and focusing to the particular case of spherical CR 
geometry. We remark that the construction gives a natural metric on uniformized spherical CR structures 
on manifolds.
We proceed in the last two sections with the study of two simple cases in one 
dimension projetive spaces:  
first the complex projective line and the punctured spheres. We then move on to the real projective line, 
with attention to self-similar sets $\Lambda$. We 
give a generalization of the famous Basmajian formula in hyperbolic geometry.
The three following sections are independent.

\section{Complex hyperbolic groups}\label{sec:complexkleinian}

Natural examples of the generalized Hilbert metric can be defined for 
open subsets in $\P(\C^{n+1})$ which are 
domains of discontinuity of discrete subgroups $\Gamma\subset \PGL(n+1,\C)$. 
The general theory of such sets of discontinuity is not yet fully
developed but a special case has been studied which is of major
interest for us: if $\Gamma$ is a complex hyperbolic group, i.e.
a discrete subgroup of $\PU(n,1)$.

We show in this section that if $\Gamma$ is a non-elementary discrete
subgroup of $\PU(n,1)$, then its domain of discontinuity in $\P(\C^{n+1})$
inherits a generalized Hilbert metric. We then focus on the case $n=2$,
and give geometric interpretations of this metric. Recall from Remark 
\ref{rem:complexhyperbolicspace} the notations for the complex
hyperbolic space $\H^n_\C \subset \P(\C^{n+1})$ and
its boundary $\partial\H^n_\C$ as well as the projective
unitary group $\PU(n,1)$.

\subsection{Kulkarni limit set as a union of hyperplanes}
\label{subsec:kulkarni}

In order to define an appropriate set $\Lambda$ of 1-forms we start by recalling 
the definition of limit set for these groups. 
The following definition transposes in this context
a definition due to Kulkarni for general actions of 
groups on topological spaces. As in the general situation,
we denote by $\P$ the projective space $\P(\C^{n+1})$ and
by $\P'$ its dual.

\begin{definition}  Let $\Gamma\subset \PGL(n+1,\C)$ be a discrete subgroup
\begin{enumerate}
\item $L_0(\Gamma)$ is the closure of the set of points in  $\P$ with infinite isotropy.
\item $L_1(\Gamma)$ is the closure of the set of cluster points of the $\Gamma$-orbits of all $z\in \P\setminus L_0(\Gamma)$.
\item $L_2(\Gamma)$ is the closure of the set of cluster points of the $\Gamma$-orbits of all compact subsets $K\subset  \P\setminus (L_0(\Gamma)\cup L_1(\Gamma))$.
\item The Kulkarni limit set $L(\Gamma)$ is the set $L_0(\Gamma)\cup L_1(\Gamma)\cup L_2(\Gamma)$.
\item The Kulkarni discontinuity region $\Omega_\Gamma$ is $\P\setminus L(\Gamma)$.
\item We denote by $\Lambda_\Gamma \subset \P'$ the set of forms whose kernel is included in $L(\Gamma)$.
\end{enumerate}
\end{definition}

In the case of $\Gamma$ a complex hyperbolic group, Cano Liu and Lopez 
\cite[Theorem 0.1]{CanoLiuLopez} prove that $L(\Gamma)$ is the union of kernels
of $\Lambda_\Gamma$ and that $\Omega_\Gamma$ is the largest set on which $\Gamma$
acts properly and discontinuously. Note that one may
even take the a priori smaller $\Lambda_\Gamma$ consisting of form whose kernel is tangent to the sphere at infinity $\partial \H^n_\C$. In view of our definitions, this translate to:

\begin{theorem}\label{thm:complexhyperbolic}
Let $\Gamma$ be a complex hyperbolic group. 
Then the pair $(\Omega_\Gamma,\Lambda_\Gamma)$ is admissible. 
Moreover, if $\Gamma$ is Zariski-dense in $\PU(n,1)$, then $\Lambda_\Gamma$ is separating.
\end{theorem}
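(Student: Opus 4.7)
Admissibility is essentially bookkeeping. The set $\Omega_\Gamma$ is open in $\P$ as the complement of the closed Kulkarni set $L(\Gamma)$, and $\Lambda_\Gamma$ is closed in $\P'$ since the condition $\ker(\phi)\subset L(\Gamma)$ is closed in $\phi$. Any $\omega\in\Omega_\Gamma$ avoids $L(\Gamma)$, hence avoids every $\ker(\phi)$ with $\phi\in\Lambda_\Gamma$, which is precisely condition \eqref{eq:condi-subset}.

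For the separating property I argue by contradiction. Assume distinct $\omega,\omega'\in\Omega_\Gamma$ are not separated by $d_{\Lambda_\Gamma}$. By Theorem \ref{thm:metric}, the projection $\Lambda_{\Gamma,\omega,\omega'}$ onto the complex line $V=(\omega\omega')\simeq\CP$ lies on a generalized circle $C\subset V$ fixed pointwise by an anti-holomorphic involution swapping $\omega$ and $\omega'$. By the Cano--Liu--Lopez description recalled just before the statement, $\Lambda_\Gamma$ contains every tangent form $\phi_p=\la\cdot,\bp\ra$ attached to a point $p$ of the sphere limit set $\Lambda_\Gamma^{\mathrm{sph}}\subset\partial\H^n_\C$, and the projection of $\phi_p$ onto $V$ is the single point $L_p\cap V$. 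Setting
\[X=\{p\in\partial\H^n_\C\ :\ L_p\cap V\in C\},\]
the map $p\mapsto L_p\cap V$ is real algebraic (polynomial in $\bp$ via the Hermitian form) and $C$ is a real algebraic curve in $V$, so $X$ is $\R$-Zariski closed in the real algebraic variety $\partial\H^n_\C$, and by assumption $\Lambda_\Gamma^{\mathrm{sph}}\subset X$.

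The heart of the argument is a Zariski density transfer. The $\R$-Zariski closure $Y$ of $\Lambda_\Gamma^{\mathrm{sph}}$ is $\Gamma$-invariant and contained in $X$. The setwise $\PU(n,1)$-stabilizer of $Y$ is an $\R$-Zariski closed subgroup of $\PU(n,1)$ containing $\Gamma$, so Zariski density forces it to equal $\PU(n,1)$, making $Y$ itself $\PU(n,1)$-invariant. As $\PU(n,1)$ acts transitively on $\partial\H^n_\C$, this gives $Y=\partial\H^n_\C$ and hence $X=\partial\H^n_\C$: \emph{every} projective tangent hyperplane of $\partial\H^n_\C$ cuts $V$ in a point of $C$.

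The main obstacle is then to contradict this last geometric statement (which presupposes $n\geq 2$; for $n=1$ the sphere limit set of a non-elementary $\Gamma\subset\PU(1,1)$ always lies on a circle in $\CP$ and the conclusion of the theorem fails). I use projective duality with respect to the Hermitian form: $L_p\cap V=q$ is equivalent to $\la\bq,\bp\ra=0$, i.e.\ $p\in q^\perp$. For $q\in V$ strictly outside $\partial\H^n_\C$, the restriction of $\la\cdot,\cdot\ra$ to $q^\perp$ has signature $(n-1,1)$, so $q^\perp\cap\partial\H^n_\C$ is a non-empty real submanifold of dimension $2n-3\geq 1$, producing $p$ with $L_p\cap V=q$. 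The image of $p\mapsto L_p\cap V$ therefore contains a non-empty open subset of $V$ and cannot be included in the one-real-dimensional curve $C$, which is the desired contradiction.
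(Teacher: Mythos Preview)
Your argument is correct and follows the paper's strategy: non-separation (via Theorem~\ref{thm:metric}) traps $\Lambda_\Gamma$ in a proper real-algebraic set, and Zariski density of $\Gamma$ is then invoked for a contradiction. The paper's proof is a two-line sketch that stops at ``then $\Gamma$ would not be Zariski dense''; you make this step explicit by passing to the sphere limit set, using transitivity of $\PU(n,1)$ on $\partial\H^n_\C$ to force the constraint set $X$ to be the whole sphere, and then contradicting $X=\partial\H^n_\C$ by showing the map $p\mapsto L_p\cap V$ hits a two-real-dimensional set of points in $V$. You also correctly note that the argument needs $n\geq 2$ and that the separating conclusion actually fails for $n=1$, a point the paper's statement does not address.
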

\begin{proof}
The fact that the pair $(\Omega_\Gamma,\Lambda_\Gamma)$ is admissible follows from definitions and  \cite[Theorem 0.1]{CanoLiuLopez}.  The separation property follows from the fact that, by contraction, if the metric does not separate two points,
than $\Lambda_\Gamma$ should be contained in a Zariski closed subset of $\P'$ (see remark \ref{remark:zariski}).  But, in that case, the group $\Gamma$ (which 
preserves $\Lambda_\Gamma$) would not be Zariski dense.
\end{proof}

A detailed study of such examples is done in \cite{CanoParkerSeade}, for complex hyperbolic subgroups
that are included in $\mathrm{SO}(n,1)$. They prove that in this case $\Omega_\Gamma$ consists
of $3$ connected components. Their groups are not Zariski-dense in $\mathrm{SU}(n,1)$. 
The metric $d_{\Lambda_\Gamma}$ gives a non-separating metric in such cases.

The metric defined here seems most interesting when restricted to 
$\Omega = \Omega_\Gamma \cap \partial \H^n_\C$, as we explain in the following section.

\subsection{Complex hyperbolic geometry and spherical CR geometry}\label{section:CR}

We give here a geometric reinterpretation of the separability condition (Theorem \ref{thm:metric}) 
in the case of complex hyperbolic groups acting on $\P(\C^{n+1})$. We then give examples of discrete 
complex hyperbolic groups for which we can check this separability condition. Those groups arise
from the construction of spherical CR geometric structure on $3$-manifolds that are uniformizable. One
example is a representation of the $8$-knot complement group in $\PU(2,1)$ associated to a uniformisable 
spherical CR structure on the $8$-knot complement, constructed in \cite{DerauxFalbel}. Another one is
 constructed for the Whitehead link complement in \cite{ParkerWill}.

We begin by recalling 
the definition if some geometric objects in complex hyperbolic geometry, mainly the bisectors.
A {\sl bisector} is the locus of  points equidistant  from two given points $p_0$ and $p_1$ 
in $\H^n_\C$.  In homogeneous coordinates, 
  it is given by the
negative vectors $\mathbf{z}=(z_0,z_1,z_2)$ that satisfy the equation
\begin{equation*}\label{eq:bisector}
|\langle \mathbf{z},\tilde{p}_0\rangle|=|\langle
\mathbf{z},\tilde{p}_1\rangle|,
\end{equation*}
where $\tilde{p}_i$ are lifts satisfying $\langle \tilde{p}_0,\tilde{p}_0 \rangle=\langle
\tilde{p}_1,\tilde{p}_1 \rangle$.  This equation makes sense up to the boundary $\partial \H^n_\C$ defining {\sl spinal spheres} as boundaries of bisectors.
Observe that bisectors and spinal spheres are defined by algebraic equations.
CR structures appear naturally as boundaries of complex manifolds.  The local geometry
of these structures was studied by E. Cartan \cite{C} who defined, in dimension three, a curvature analogous to
curvatures of a Riemannian structure.  When that curvature is zero,
Cartan called them spherical CR structures and developed their basic properties.  A much later  study by
Burns and Shnider \cite{BS} contains the modern setting for these structures.

\begin{definition}
A spherical CR-structure on a $(2n-1)$-dimensional manifold is a geometric structure
modeled on the homogeneous space $\S^{2n-1}:=\partial \H^{2n-1}_{\C}$ with the above
${\PU(n,1)}$ action.
\end{definition}

A particular class of such structures is the natural analog of complete structure for
metric geometries:
\begin{definition}
We say a spherical CR-structure on a ${2n-1}$-manifold is uniformizable if it is
equivalent to a quotient of the domain of discontinuity in $ \S^{2n-1}$ of a
discrete subgroup of ${\PU(2,1)}$.
\end{definition}

Here, equivalence between CR structures is defined, as usual, by
diffeomorphisms preserving the structure. A $(2n-1)$-manifold $M$ with a spherical CR structure 
is said to be an 
\emph{uniformized CR spherical manifold} if there is a discrete group $\Gamma$ of $\PU(n,1)$,
with limit set $\Lambda$ in the sphere $\partial \H^n_\C$ such that the spherical CR structure on $M$
is equivalent to the quotient of $\Omega = \partial \H^n_\C\setminus \Lambda$ by $\Gamma$.

\subsection{Invariant metric for uniformized CR spherical manifold}

Let $M$ be a uniformized CR spherical manifold. Denote as above by $\Gamma$ the discrete group
with limit set $\Lambda$ and domain of discontinuity $\Omega$ in the sphere $\partial \H^n_\C$
such that $M \simeq \Gamma \backslash \Omega$.

We identify, as in section \ref{subsec:kulkarni}, $\Lambda$ with the subset of the dual projective space
$\P'$ consisting of forms whose kernel is tangent to the sphere at points of $\Lambda$. Then, Theorem
\ref{thm:complexhyperbolic} states that $d_\Lambda$ defines a $\Gamma$-invariant, non-separating 
metric on the domain of
discontinuity of $\Gamma$ in $\P(\C^{n+1})$. We look here at this metric restricted to the hypersurface
$\Omega$. We reintrepret here the definition \ref{eq:def-dist} in terms more classical
in the framework of complex hyperbolic geometry. We the proceed with a geometric interpretation of the
separation condition given in Section \ref{subsec:separation}.

Let $\phi$ and $\phi'$ be 1-forms whose kernel are, respectively, complex tangent spaces at points $p$ and $p'$ in $\Lambda\in \S^{2n-1}$. Choose lifts $\bp$ and $\bp'$ of $p$ and $p'$.
Then lifts of $\phi$ and $\phi'$ are given explicitly by $\bphi(z)=\langle z,\bp\rangle$ and $\bphi'(z)=\langle z,\bp'\rangle$.  One computes, for $\omega,\omega'\in \Omega$,

$$[\phi,\phi',\omega,\omega'] =\dfrac{\langle \bomega,\bp\rangle\langle\bomega',\bp'\rangle}{\langle \bomega,\bp'\rangle\langle\bomega',\bp\rangle}$$
which is the hermitian cross-ratio of the four points $p,p',\omega,\omega'$.  

Let $\omega,\omega'$ be two points in $ \S^{2n-1}$.  If $p\in \S^{2n-1}$ is another point then its projection in the geodesic in the complex disc defined by
$\omega$ and $\omega'$ is 
$$\pi_{(\omega\omega')}(p)=t \bomega + \dfrac{1}{t}\bomega'\mbox{ with } t=\sqrt{\dfrac{\lvert\langle { \bp},{ \bomega} \rangle\rvert}{\lvert\langle {  \bp},{ \bomega} \rangle\vert}},$$

As a direct corollary, we obtain a condition for cross-ratios to have modulus $1$.
\begin{proposition}
Let $(a,b,c,d)$ be four pairwise distinct points  in $ \S^{2n-1}$.
The cross-ratio $[a,b,c,d]$ has modulus $1$ if and only the projection of $c$ and $d$
on the geodesic $(ab)$ coincide. 
\end{proposition}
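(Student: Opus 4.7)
The plan is a direct computation unwinding the two formulas established immediately before the statement. Fix lifts $\bm{a},\bm{b},\bm{c},\bm{d}\in V_0$ of the four distinct boundary points, and recall the identification, valid in the complex hyperbolic setting, of a boundary point $p\in\partial\H^n_\C$ with the form $\langle\cdot,\bm{p}\rangle$ on $\C^{n,1}$. The Hermitian cross-ratio displayed in this section then specializes to
\[
[a,b,c,d] \;=\; \frac{\langle \bm{c},\bm{a}\rangle\,\langle \bm{d},\bm{b}\rangle}{\langle \bm{c},\bm{b}\rangle\,\langle \bm{d},\bm{a}\rangle},
\]
so that $|[a,b,c,d]|=1$ is equivalent to the equality
\[
\frac{|\langle \bm{c},\bm{b}\rangle|}{|\langle \bm{c},\bm{a}\rangle|} \;=\; \frac{|\langle \bm{d},\bm{b}\rangle|}{|\langle \bm{d},\bm{a}\rangle|}.
\]

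On the other hand, the projection formula above writes $\pi_{(ab)}(p) = t(p)\,\bm{a} + t(p)^{-1}\bm{b}$, where $t(p)^2$ is precisely the ratio $|\langle\bm{p},\bm{b}\rangle|/|\langle\bm{p},\bm{a}\rangle|$ that appears on each side of the displayed equality. The parametrization $t\mapsto [\,t\,\bm{a} + t^{-1}\bm{b}\,]$ from $\R_+^*$ into the complex disc through $a$ and $b$ is injective on projective classes (a projective class determines $t^2$, and two positive reals with equal squares are equal). Consequently $\pi_{(ab)}(c)$ and $\pi_{(ab)}(d)$ coincide if and only if $t(c)^2 = t(d)^2$, which is exactly the equality above. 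Combining the two equivalences gives the proposition.

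The argument is essentially bookkeeping; no compactness, convexity, or separation subtlety arises, precisely because the four points are assumed pairwise distinct, which forces every Hermitian product involved to be nonzero. The only real point of care is to reconcile the stated formula for $t$ with its intended form: as displayed, both arguments of the square root carry $\bm{\omega}$, which must be a typographic slip since otherwise $t\equiv 1$ and the formula is vacuous. I would include a one-line sanity check that the intended $t(p)^2 = |\langle\bm{p},\bm{b}\rangle|/|\langle\bm{p},\bm{a}\rangle|$ does give the orthogonal projection, by verifying that $t\,\bm{a}+t^{-1}\bm{b}$ with this value of $t$ is a critical point of $|\langle\cdot,\bm{p}\rangle|^2/\langle\cdot,\cdot\rangle$ restricted to the geodesic—a routine differentiation in $t$ that I would not spell out.
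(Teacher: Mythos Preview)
Your proof is correct and is precisely the unwinding the paper has in mind: the proposition is stated there as a ``direct corollary'' of the projection formula, with no further argument, and your computation is exactly the intended two-line verification that $|[a,b,c,d]|=1$ amounts to $t(c)^2=t(d)^2$. Your remark about the typo in the displayed formula for $t$ (both entries reading $\bm\omega$) is also correct; the intended second argument is $\bm\omega'$.
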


One can compare with the condition in Theorem \ref{thm:metric} for the set $\Lambda$ to separate 
a pair $\omega,\omega'$.  Indeed the above proposition shows that $\Lambda$ separates this
pair if its projection on the complex disc defined by
$\omega$ and $\omega$ is contained in a geodesic orthogonal to the geodesic defined by $\omega,\omega'$.
Observe also that we can exchange the roles of $a,b$ and $c,d$ in the previous proposition.

\begin{corollary}
If $\Lambda$ is not contained in a bisector then $d_{\Lambda}$ is a metric.
\end{corollary}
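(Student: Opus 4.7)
The plan is to prove the corollary by contraposition: assume $d_\Lambda$ fails to be a metric and produce a bisector containing $\Lambda$. Since $d_\Lambda$ is already a pseudo-metric (symmetric, nonnegative, triangle inequality), the only way it can fail to be a metric is to fail the separation axiom. So I would start by picking distinct points $\omega,\omega'\in\Omega\subset\partial\HdC$ with $d_\Lambda(\omega,\omega')=0$, and apply Theorem \ref{thm:metric} to conclude that $|[\phi,\phi',\omega,\omega']|=1$ for all $\phi,\phi'\in\Lambda$.

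Next I would translate this modulus-$1$ condition into the hermitian language of this section. Writing $\phi=\phi_p$, $\phi'=\phi_{p'}$, the formula just above relating our cross-ratio to the hermitian cross-ratio gives $|[p,p',\omega,\omega']|=1$ for every pair $p,p'\in\Lambda$. By the proposition immediately preceding the corollary (with the roles of the two pairs exchanged), this means that for every $p,p'\in\Lambda$, the projections $\pi_{(\omega\omega')}(p)$ and $\pi_{(\omega\omega')}(p')$ onto the geodesic joining $\omega$ and $\omega'$ coincide. In other words, there is a single point $q$ on the geodesic $(\omega\omega')$ such that $\pi_{(\omega\omega')}(p)=q$ for every $p\in\Lambda$.

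Finally, I would read off the equation of the fiber $\pi_{(\omega\omega')}^{-1}(q)$ from the explicit projection formula: the condition $\pi_{(\omega\omega')}(p)=q$ is equivalent to $|\langle\bp,\bomega\rangle|/|\langle\bp,\bomega'\rangle|$ being a fixed positive constant $c$. After rescaling the lift $\bomega'\mapsto c\bomega'$, this becomes $|\langle\bp,\bomega\rangle|=|\langle\bp,\bomega'\rangle|$, which is precisely the defining equation of a bisector (extended to the boundary as a spinal sphere) between the two points with lifts $\bomega$ and $c\bomega'$. Hence $\Lambda$ is contained in a bisector, which is the contrapositive of the statement.

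The step I expect to require the most care is the last one, namely matching the set $\{z:|\langle z,\bomega\rangle|=|\langle z,c\bomega'\rangle|\}$ with the paper's definition of a bisector, since that definition requires the two focal points to lie in $\HdC$ with lifts of equal (negative) hermitian norm, whereas here $\omega,\omega'$ are null vectors on $\partial\HdC$. To handle this I would either adopt the convention that the equation $|\langle z,\tilde p_0\rangle|=|\langle z,\tilde p_1\rangle|$ with $\langle\tilde p_0,\tilde p_0\rangle=\langle\tilde p_1,\tilde p_1\rangle$ defines a bisector for any pair of distinct lifts (including null ones, which is the interpretation consistent with the spinal-sphere discussion of Section \ref{section:CR}), or perturb the lifts slightly into $V_-$ while preserving the equality of norms and observe that the equation, and hence the spinal sphere containing $\Lambda$, is unchanged up to taking a limit bisector.
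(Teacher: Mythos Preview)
Your argument is correct and follows essentially the same contrapositive route as the paper: assume $d_\Lambda$ fails to separate some $\omega,\omega'$, use the modulus-$1$ condition on cross-ratios together with the preceding proposition to see that every point of $\Lambda$ has the same projection onto the geodesic $(\omega\omega')$, and conclude that $\Lambda$ lies in a bisector. The only difference is in the final identification: the paper invokes the standard slice description of bisectors (``the inverse image of a real geodesic under orthogonal projection onto a complex line is a bisector''), which directly gives a bisector with complex spine $(\omega\omega')$ and real spine the geodesic orthogonal to $(\omega\omega')$ through the common projection point, thereby sidestepping the null-focus issue you flagged; your explicit equation $|\langle z,\bomega\rangle|=|\langle z,c\,\bomega'\rangle|$ describes exactly the same hypersurface, and your proposed fix (replacing the null foci by a pair of interior points on that orthogonal real spine with equal hermitian norm) is precisely what the spine description accomplishes.
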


\begin{proof}
 The inverse image of a geodesic by a projection in a complex disc is a bisector.  By the proposition, 
 $\Lambda$ does not separate two points $\omega,\omega'$ if 
$\Lambda$ is included in a bisector.
\end{proof}

We may rewrite the discussion following Theorem \ref{thm:metric}. One can think of two cases where 
$d_\Lambda$ will not separate points in $\Omega$: suppose $\Gamma$ is included in a subgroup
$\PU(n-1,1)$ or $\mathrm{PO}(n,1)$, or in other terms that it preserves a totally geodesic submanifold. 
Then its limit set would be included in a bisector, which would prevent the 
separation condition. The following theorem states that it is essentially the only problem.  Recall that in Theorem \ref{thm:complexhyperbolic} we proved that the metric on the complement of the Kulkarni limit set is separating.
The following theorem restates this result considering only points in the boundary of complex hyperbolic space in order to stress that 
the metric provides a distance on the regular set of the action of $\Gamma$ on $\S^{2n-1}$.

\begin{theorem}\label{thm:metriccomplexhyp}
Let $\Gamma$ be a discrete subgroup of $\PU(n,1)$  
with limit set a proper subset of  $\S^{2n-1}$. 
Suppose that $\Gamma$ is Zariski dense in $\PU(n,1)$.
Then $d_{\Lambda}$ defines a $\Gamma$-invariant metric on the regular set 
$\Omega$.  
\end{theorem}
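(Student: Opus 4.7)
The plan is to derive this essentially as a corollary of Theorem \ref{thm:complexhyperbolic}, together with Proposition \ref{prop:invariance}, by observing that the restriction of $d_\Lambda$ from the projective Kulkarni discontinuity region down to its intersection with the sphere $\S^{2n-1}$ retains all the needed properties.

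First I would set up notation: let $\Lambda_\Gamma \subset \partial\H^n_\C$ be the classical limit set, which we identify with the subset of $\P'$ consisting of the tangent hyperplane forms $\phi_p$ for $p\in\Lambda_\Gamma$, as in example (3) of the introduction. The regular set $\Omega$ is then $\partial\H^n_\C \setminus \Lambda_\Gamma$, and it sits inside the Kulkarni discontinuity region $\Omega_\Gamma$ of Section \ref{subsec:kulkarni}. Since $\Gamma$ preserves both $\partial\H^n_\C$ and its Kulkarni limit set, $\Omega$ is $\Gamma$-invariant.

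Next I would invoke Theorem \ref{thm:complexhyperbolic}: the Zariski density of $\Gamma$ in $\PU(n,1)$ implies that $\Lambda_\Gamma$ is a separating set, so $d_{\Lambda_\Gamma}$ is a genuine metric on the full discontinuity region $\Omega_\Gamma$. Restricting to the subset $\Omega \subset \Omega_\Gamma$, the pseudo-metric remains symmetric, non-negative, satisfies the triangle inequality, and separates points (these properties pass directly to any subset). Thus $d_\Lambda$ restricted to $\Omega$ is still a metric. The $\Gamma$-invariance then follows from Proposition \ref{prop:invariance}: since $\Gamma \subset \PU(n,1) \subset \PGL(n+1,\C)$ preserves both $\Omega$ and $\Lambda$, each $g\in\Gamma$ acts as an isometry of $(\Omega, d_\Lambda)$.

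The only point that requires care, and which I regard as the main subtle step, is the separation property obtained from Zariski density. As sketched in Remark \ref{remark:zariski}, if $d_\Lambda$ failed to separate some pair $\omega,\omega'\in\Omega$, then by Theorem \ref{thm:metric} there would exist an anti-holomorphic involution of the line $(\omega,\omega')$ fixing the projection $\Lambda_{\omega,\omega'}$ pointwise. Chasing this back, $\Lambda$ would have to be contained in an $\R$-Zariski closed proper subset of $\P'$ (cut out by the modulus-one conditions for all $\phi'$). Because $\Gamma$ preserves $\Lambda$ and $\Lambda$ spans a proper $\R$-Zariski subvariety, $\Gamma$ would preserve this subvariety and hence would not be Zariski dense in $\PU(n,1)$, contradicting the hypothesis. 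This contradiction closes the argument.
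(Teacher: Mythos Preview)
Your proposal is correct and follows essentially the same route as the paper. The paper explicitly presents Theorem \ref{thm:metriccomplexhyp} as a restatement of Theorem \ref{thm:complexhyperbolic} restricted to the boundary sphere, and your argument does exactly this: restrict the separating metric on $\Omega_\Gamma$ to the subset $\Omega = \Omega_\Gamma \cap \partial\H^n_\C$, invoke Proposition \ref{prop:invariance} for $\Gamma$-invariance, and recall the Zariski-density argument from Remark \ref{remark:zariski} for separation.
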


A practical criterium to verify that the subgroup $\Gamma$ is Zariski dense and that the distance is well defined is given in the following corollary. 

\begin{corollary}
Let $\Gamma$ be a discrete subgroup of $\PU(n,1)$  
with limit set a proper subset of  $\S^{2n-1}$.
Suppose no finite index subgroup of $\Gamma$ stabilizes a totally geodesic submanifold nor a point at infinity.
Then $d_{\Lambda}$ defines a $\Gamma$-invariant metric on the regular set 
$\Omega$. 
\end{corollary}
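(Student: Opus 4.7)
The plan is to invoke Theorem \ref{thm:metriccomplexhyp}, so the work reduces to showing that under the stated hypotheses, $\Gamma$ must be Zariski dense in $\PU(n,1)$. I would argue by contraposition: assume $\Gamma$ is not Zariski dense and exhibit a finite index subgroup that stabilizes a totally geodesic submanifold or a point of $\partial\H^n_\C$, contradicting the corollary's assumption.

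First, pass to the identity component. Let $H$ be the Zariski closure of $\Gamma$ in $\PU(n,1)$, viewed as a real algebraic subgroup, and let $H^0$ denote its identity component. Since real algebraic groups have finitely many connected components, $H/H^0$ is finite, so $\Gamma_0 := \Gamma\cap H^0$ has finite index in $\Gamma$. A standard coset argument (writing $H=\bigcup g_i H^0$ and taking Zariski closures) shows that $\Gamma_0$ is Zariski dense in $H^0$. Therefore every subset of $\overline{\H^n_\C}$ stabilized by $H^0$ is also stabilized by $\Gamma_0$.

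The crux, and the main obstacle, is the geometric classification of proper connected algebraic subgroups of the rank-one simple Lie group $\PU(n,1)$. I would appeal to the standard description of the Lie subalgebras of $\mathfrak{su}(n,1)$: up to conjugation, any proper connected algebraic subgroup of $\PU(n,1)$ is contained in either
\begin{enumerate}
\item a parabolic subgroup, which fixes a unique point of $\partial\H^n_\C$, or
\item the stabilizer of a proper totally geodesic submanifold of $\H^n_\C$ --- a complex subspace $\H^k_\C$, a real form $\H^k_\R$, or a single interior point viewed as the zero-dimensional case, corresponding to the maximal compact subgroup $\mathrm{U}(n)$.
\end{enumerate}
Since $\Gamma$ has finite limit set complement, $\Gamma$ is not trivial; in either alternative, $H^0$ stabilizes one of these geometric objects, and hence so does the finite index subgroup $\Gamma_0$.

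This contradicts the hypothesis that no finite index subgroup of $\Gamma$ stabilizes either a totally geodesic submanifold or a point at infinity. Consequently $\Gamma$ is Zariski dense in $\PU(n,1)$, and Theorem \ref{thm:metriccomplexhyp} yields that $d_\Lambda$ is a $\Gamma$-invariant metric on the regular set $\Omega$.
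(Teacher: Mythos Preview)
Your proposal is correct and follows essentially the same route as the paper: both arguments pass to the identity component $H^0$ of the Zariski closure, observe that the hypothesis on finite-index subgroups forces $H^0$ not to stabilize a totally geodesic subspace or a boundary point, and then invoke the classification of connected subgroups of $\PU(n,1)$ to conclude $H^0=\PU(n,1)$. The paper supplies a precise reference for what you call the ``standard description of the Lie subalgebras'': it is \cite[Thm~4.4.1]{ChenGreenberg}. The only cosmetic difference is that after obtaining Zariski density, the paper argues directly that the limit set cannot lie in a bisector (using the preceding corollary), whereas you simply invoke Theorem~\ref{thm:metriccomplexhyp}; these amount to the same thing. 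Your sentence ``Since $\Gamma$ has finite limit set complement, $\Gamma$ is not trivial'' is garbled and unnecessary---if $H^0$ were trivial then $\Gamma_0$ would be trivial and would stabilize every totally geodesic submanifold, already yielding the contradiction.
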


\begin{proof}
Consider $G$ the connected component of the Zariski closure of $\Gamma$. It is a connected
subgroup of $\PU(n,1)$ which does not stabilizes a totally geodesic subspace. From 
\cite[Thm 4.4.1]{ChenGreenberg}, $G$ contains $\PU(n,1)$. Hence $G$ does not 
stabilize a proper algebraic subset of $\S^{2n-1}$.
Suppose now that some bisector $B$ contains the limit set $\Lambda$ of $\Gamma$. Then $\Gamma$ preserves the
algebraic subset $\bigcap_{\gamma\in \Gamma}\gamma B$ and $\Lambda$ is included in this intersection. 
As it is algebraic, $G$ stabilizes the intersection. This is a contradiction.
\end{proof}

\begin{remark}

It is quite easy to prove that the subgroups $\Gamma$ arising in spherical CR uniformizations of a knot 
or link complement $M$ as in \cite{DerauxFalbel,ParkerWill} does not virtually preserve a 
totally geodesic 
submanifold. Thus to such an uniformization is associated a $\Gamma$-invariant metric on 
the covering $\Omega$ of $M$.  
\end{remark}

\subsection{Infinitesimal metric}

We conclude this section by a reinterpretation of the formula for the infinitesimal metric defined in
section \ref{Finsler metric}.
Observe that the formula for the Finsler metric given in Theorem \ref{theorem:metric} can be written - 
with choices of lifts - for
a tangent vector $v$ at $\omega \in \Omega$ by:
\begin{eqnarray*}
\norm{\omega}{v} = & \max_{\phi, \phi' \in \Lambda} \Re\left(\frac{\bphi'(v)}{\bphi'(\bomega)} - \frac{\bphi(v)}{\bphi(\bomega)}\right)\\
= & \max_{p, p' \in \Lambda} \Re\Bigl\la\dfrac{\bp'}{\la \bp',\bomega \ra}-\dfrac{\bp}{\la \bp,\bomega \ra},v\Bigr\ra
\end{eqnarray*}

In order to have an explicit description of the norm we shall use the Siegel model of two dimensional complex hyperbolic space.
Its boundary is  $2\Re(z_1)+|z_2|^2=0$ (intersected with an affine chart $z_3=1$).  Writing $z_k=x_k+iy_k$, the tangent space in $(z_1,z_2)$ is given by
$$
2dx_1+2x_2dx_2+2y_2dy_2=0.
$$
In particular the tangent space at ${\bf o}=(0,0)$ is given by $dx_1=0$, therefore consists of vectors of the form
$$v_{z,t}=\begin{bmatrix} it\\ z\\ 0 \end{bmatrix}$$
Applying the previous formula in these coordinates we have
$$
\norm{\bf o}{v_{z,t}}=\max_{p, p' \in \Lambda} \Re\Bigl\la\dfrac{\bp'}{\la \bp',\bf o\ra}-\dfrac{\bp}{\la \bp,\bf o\ra},v_{z,t}\Bigr\ra.
$$

\begin{remark}
\begin{enumerate}
 \item The vector $w=\dfrac{\bp'}{\la \bp',\bomega\ra}-\dfrac{\bp}{\la \bp,\bomega\ra}$ is of  positive type.
Indeed, one computes
$$\la w,w\ra= -2\Re\left(\dfrac{\la \bomega,\bp\ra\la \bp,\bp'\ra\la \bp',\bomega\ra}{|\la \bp,\bomega\ra|^2|\la \bp',\bomega\ra|^2}\right)$$
and the triple product $\arg(-\la \bomega,p\ra\la p,p'\ra\la p',\bomega\ra)\in [-\pi/2,\pi/2]$ is Cartan's invariant. 
In particular $w$ is polar to a complex line in $\HdC$.
\item The conditions $\la w,\bp\ra=\la w ,\bp'\ra$ and $\la w, \bomega\ra=0$ imply that $w$ is orthogonal to the complex line defined by $p$ and  
$p'$ passing through $\omega$.
\end{enumerate}
\end{remark}

\section{Complex projective line: $n$-punctured spheres}\label{section:puncturedsphere}

We consider in this section the complex projective line $\CP$. In fact, we restrict
ourselves to a very specific case: let $\Lambda = 
\{p_1,\cdots,p_n\}$ a finite set
 of points in $\C$ and $\Sigma=\CP\setminus\{p_1,\cdots,p_n\}$ its complement, the $n$-punctured sphere.   
 The $n$ linear forms in 
$\Lambda$ are given by $\phi_i=[1 : -p_i]$, so that 
$$\bphi_i \begin{pmatrix}z\\1\end{pmatrix}=z-p_i.$$
We have already observed that when $n=1, 2$ or $3$, the metric is not separating. So
we assume here that $n\geqslant 4$ and $\Lambda$ is not included in a circle. In this
case, $d_\Lambda$ is a metric, as follows from Theorem \ref{thm:metric}. For simplicity,
we denote this metric by $d$.

On the other hand, the surface $\Sigma$ may be equipped with
a hyperbolic metric, denoted by $d_h$. We prove that the infinitesimal metric defined by $d$
 is quasi-isometric to the infinitesimal hyperbolic metric defined by $d_h$.   The reader can guide himself through the proof
 of the following proposition keeping in mind  the physical interpretation of the infinitesimal metric given in Remark \ref{physics}.

\begin{theorem}\label{thm:QI-n-punctured-sphere}
There is a quasi-isometric diffeomorphism between $\bigl(\Sigma,d_h\bigr)$ and $\bigl(\Sigma,d\bigr)$ 
which fixes the punctures.
\end{theorem}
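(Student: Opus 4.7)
The plan is to construct an explicit diffeomorphism $\Phi\colon\Sigma\to\Sigma$ which extends continuously to fix the $n$ punctures and whose pullback $\Phi^*\|\cdot\|_d$ is uniformly bilipschitz to $\|\cdot\|_h$ on $T\Sigma$. Bilipschitz equivalence of Finsler norms integrates to bilipschitz, hence quasi-isometric, equivalence of the induced path distances.

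First I would use Theorem \ref{theorem:metric} to unpack the Hilbert Finsler norm: for $v\in T_z\Sigma\simeq\mathbb{C}$,
\[
\|v\|_d = \max_{i}\Re\left(\frac{v}{z-p_i}\right)-\min_{i}\Re\left(\frac{v}{z-p_i}\right),
\]
since exchanging $\phi$ and $\phi'$ in the defining maximum turns the two-parameter optimization into a max minus min. Then I fix a puncture, translated to $p_k=0$, and decompose $v=v_r+v_t$, with $v_r$ parallel to $z$ and $v_t$ orthogonal in the Euclidean sense on $T_z\mathbb{C}=\mathbb{C}$. Expanding $1/(z-p_j)=-1/p_j+O(|z|)$ for $j\ne k$ and isolating the dominant $1/z$ contribution, a short computation yields the cusp asymptotic
\[
\|v\|_d \asymp \frac{|v_r|}{|z|} + |v_t|,
\]
with constants uniform in $z$ close to $p_k$ and in $\arg v$. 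The upper bound is direct. The lower bound on the tangential part is where the non-circularity of $\Lambda$ is essential: it forbids $\{p_j\}_{j\ne k}$ from lying on any line through $p_k$ (else $\Lambda$ itself would lie on that line), so that the continuous nonnegative function $\theta\mapsto \max_{j\ne k}\Im(e^{i\theta}\bar p_j/|p_j|^2)-\min_{j\ne k}\Im(\cdots)$ stays bounded away from $0$ on the compact circle $S^1$.

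Next I would compare with the standard hyperbolic cusp asymptotic $\|v\|_h \sim |v|/(|z-p_k|\,|\log|z-p_k||)$ to choose $\Phi$. On a small neighborhood $U_k$ of each puncture I set
\[
\Phi(p_k+re^{i\theta}) = p_k+f(r)e^{i\theta},\qquad f(r)=\frac{1}{|\log r|},
\]
and I glue with the identity outside $\bigcup_k U_k$ via a cutoff. In polar coordinates $d\Phi$ scales the radial direction by $f'(r)$ and the tangential direction by $f(r)/r$, and one checks directly that $f'(r)/f(r)=f(r)/r=1/(r|\log r|)$ for this $f$. Substituting into the cusp asymptotic at the point $\Phi(z)$:
\[
\|d\Phi(v)\|_d \asymp \frac{f'(r)|v_r|}{f(r)} + \frac{f(r)}{r}|v_t| \asymp \frac{|v|}{r|\log r|} \asymp \|v\|_h.
\]
On the compact thick part $\Sigma\setminus\bigcup U_k$, both Finsler norms are continuous and strictly positive, hence uniformly bilipschitz by compactness. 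Combining the cusp estimate with the thick-part estimate yields $\Phi^*\|\cdot\|_d\asymp\|\cdot\|_h$ on all of $T\Sigma$, and integrating along paths gives the announced bilipschitz equivalence of distances.

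The main obstacle is the cusp asymptotic itself, and more precisely the uniform lower bound on the tangential contribution. This is where the global hypothesis that $\Lambda$ is not contained in a circle must be translated into a purely local, uniform statement at each puncture, and it is the one point where the non-circularity enters in an essential way.
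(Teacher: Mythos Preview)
Your plan is essentially the paper's: the diffeomorphism $f(r)=1/|\log r|$ you write down is exactly the one hidden in the paper's Lemma preceding the proof, and your target cusp asymptotic $\|v\|_d\asymp |v_r|/|z|+|v_t|$ is equivalent to the paper's comparison $\|v\|_m\asymp \rho\sqrt{\cos^2\alpha/r^2+\sin^2\alpha}$. The paper carries out the lower bound more carefully than your sketch, splitting according to whether the maximum in the Finsler formula is attained at the nearby puncture ($j=k$) or at the far ones, but your outline is compatible with that.

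One point needs correction. You argue that non-circularity forbids $\{p_j\}_{j\ne k}$ from lying on a \emph{line} through $p_k$, and that this forces your function $\theta\mapsto\max_{j\ne k}\Im(e^{i\theta}\bar p_j/|p_j|^2)-\min_{j\ne k}(\cdots)$ to stay positive. The second implication is false: that function vanishes at some $\theta$ exactly when the points $1/p_j$ ($j\ne k$) are collinear, which under $z\mapsto 1/z$ means the $p_j$ lie on a \emph{circle or line} through $p_k$, not merely a line. For instance $p_k=0$, $p_2=2$, $p_3=1+i$, $p_4=1-i$ are not on any line through $0$, yet $1/p_2,1/p_3,1/p_4$ all have real part $1/2$ and your function vanishes. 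The fix is immediate---non-circularity of $\Lambda$ rules out circles through $p_k$ just as well as lines---and this is precisely how the paper phrases it (``the points $p_j$ do not lie on a circle''). So the gap is in the justification, not in the strategy.
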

We will show that the Finsler metrics are equivalent up to a diffeomorphism and this will prove the theorem.  
We begin by describing the hyperbolic metric near a cusp point of $\Sigma$.
\begin{lemma}\label{lem:local-hyp}
For  $k=1\cdots n$, there exists a neighbourhood of the puncture $p_k$ on which the infinitesimal hyperbolic metric defined by $d_h$ on $\Sigma$ is 
given by
$$\sqrt{\dfrac{dr^2}{r^2} + r^2dt^2},$$
in the local coordinate $(r,t) \mapsto p_k+re^{i t}$.
\end{lemma}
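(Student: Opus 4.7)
The plan is to reduce the statement to a standard cusp model and then compute in the coordinate specified by the lemma. The starting point is uniformization: since $n\geqslant 4$, the punctured sphere $\Sigma = \CP \setminus \{p_1,\dots,p_n\}$ admits a unique complete hyperbolic metric $d_h$ of finite area, and each puncture corresponds to a parabolic cusp.

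First, I would focus on one puncture at a time. Near $p_k$, a sufficiently small deleted neighborhood of $p_k$ in $\Sigma$ is a horocyclic neighborhood of the cusp, hence (by the classical description of cusps via Margulis neighbourhoods) is isometric to the quotient of a horoball $\{\operatorname{Im} z > y_0\}$ in the upper half plane by a parabolic translation $z \mapsto z + \ell$, equipped with the Poincaré metric $(dx^2 + dy^2)/y^2$.

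Second, I would carry out the explicit coordinate change suggested by the target formula. Setting $u = \log r$, the claimed metric $\sqrt{dr^2/r^2 + r^2 dt^2}$ pulls back to $\sqrt{du^2 + e^{2u} dt^2}$, which is precisely the upper half plane hyperbolic metric in the coordinates $Y = e^{-u} = 1/r$, $X = t$. Thus identifying the model $(r,t)\in \R_{>0}\times \R/2\pi\Z$ equipped with $\sqrt{dr^2/r^2 + r^2 dt^2}$ with the quotient of the horoball $\{Y > Y_0\}$ by $X \mapsto X + 2\pi$ realises an isometry, once the normalisation $\ell = 2\pi$ has been fixed by an appropriate choice of lift.

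Third, I would tie this back to the affine chart $(r,t)\mapsto p_k + re^{it}$ on $\Sigma$. Up to shrinking the neighbourhood and possibly rescaling $r$, the biholomorphism between the horocyclic neighbourhood of $p_k$ in $\Sigma$ and a Euclidean punctured disk $\{0 < |z - p_k| < \varepsilon\}$ can be chosen so that the hyperbolic metric, expressed in the polar coordinates $(r,t)$, takes the stated form. The main obstacle is precisely this last identification: the uniformising parameter of the cusp and the affine coordinate $z - p_k = re^{it}$ are related by a biholomorphism that depends on the global conformal geometry of $\Sigma$, and matching the two coordinates is only natural up to a bilipschitz change. The technical heart of the lemma is therefore to check that such a rescaling can be absorbed into the neighbourhood chosen, which is enough for the bilipschitz application in Theorem \ref{thm:QI-n-punctured-sphere}.
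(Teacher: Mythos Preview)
Your approach is correct and matches the paper's: both identify a cusp neighbourhood with the standard model and then apply a radial change of variable to reach the stated form; the paper phrases this via the punctured disc with metric $|dz|/(|z|\log|z|)$ and the diffeomorphism $(r,\theta)\mapsto(-1/\log r,\theta)$, which is exactly your substitution $Y=1/r$ after passing through the cover $z\mapsto e^{iz}$. Your third paragraph raises a concern that is not actually part of this lemma: the statement only asserts the existence of \emph{some} smooth coordinate $(r,t)$ near $p_k$ in which $d_h$ has the given form, and the discrepancy with the affine coordinate $z-p_k$ is handled not here but by the diffeomorphism $\varphi$ in the proof of Theorem~\ref{thm:QI-n-punctured-sphere}.
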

\begin{proof}
 Note first that the hyperbolic metric on the punctured unit disc is given by 
\begin{equation}\label{eq:hyp-punctured-disc}\dfrac{|dz|}{|z|\log|z|}\end{equation}
This can be seen by pushing forward the hyperbolic metric of the upper half plane, which is $\frac{|dz|}{\Im(z)}$, by the 
holomorphic cover map $z\longmapsto e^{iz}$. Pushing forward \eqref{eq:hyp-punctured-disc} by the diffeomorphism 
given in polar coordinates by $(r,\theta)\longmapsto (-(\log r)^{-1},\theta)$ gives the result.
\end{proof}

\begin{proof}[Proof of Theorem \ref{thm:QI-n-punctured-sphere}]
For any point $m\in\Sigma$, and any tangent vector $v$ at $m$, we denote by
$\|v\|_m  $ the Finsler norm of $v$, and by $\|v\|_m^{h}$ its hyperbolic norm.
We will prove that any diffeomorphism $\varphi: \Sigma \to \Sigma$ which restricts to the
identity around the punctures is a quasi-isometry.  First, consider $\varphi$ such a diffeomorphism, and, 
for each $k=1\cdots n$, let $V_k$ be any neighbourhood of $p_k$ such that $\varphi_{|V_k}$ is the identity.
Since $\Sigma\setminus \underset{k}{\cup} V_k$ is compact, the restriction of $\varphi$ to it is quasi-isometric.
We need therefore only to consider the situation close to the punctures. The result will be proved if we show that for 
each puncture $p_k$, there exists a constant $C>1$ and a neighbourhood $V_k$ of $p_k$ such that for any $m\in V_k$ and 
any tangent vector $v$ at $m$
\begin{equation}
\dfrac{\|v\|_m^h}{C}\leqslant\|v\|_{m}\leqslant C\|v\|_m^{h}.
\end{equation}
We consider the situation close to a fixed puncture, which we assume to be $p_1$. We may chose our coordinates so that 
$p_1=0$.
\begin{figure}[h!]
\begin{center}
\scalebox{0.5}{\includegraphics{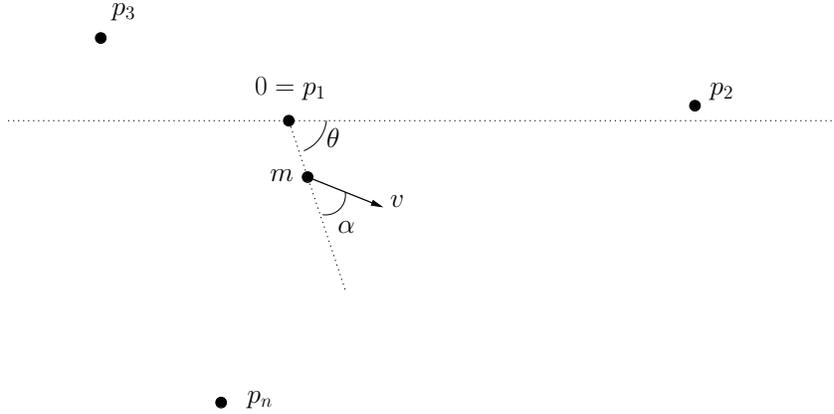}}
\end{center}
\caption{Coordinates \label{fig:coords}}
\end{figure}

In the local coordinate $(r,t) \mapsto re^{it}$, we need to find $C>1$ such that for any tangent vector
$v = a\frac{\partial}{\partial r} + b\frac{\partial}{\partial t}$, we have (see Lemma \ref{lem:local-hyp}) :
\begin{equation}\label{eq:eq}
 \frac{1}{C}\sqrt{\frac{a^2}{r^2} + r^2b^2} \leq \|v\|_m \leq C\sqrt{\frac{a^2}{r^2} + r^2b^2}.
\end{equation}

The rest of the proof is devoted to this local computation. From now on, all constants $C$ that appear are {\it some positive 
constants}, and we remain vague about their values for the sake of readability. The point $m$ and the tangent vector $v$ are 
given by

$$m=\begin{bmatrix}re^{i\theta}\\1\end{bmatrix},v=\begin{bmatrix}\rho e^{i(\theta+\alpha)}\\0\end{bmatrix}.$$

The vector $v$ decomposes as $v =\rho \cos(\alpha) \frac{\partial}{\partial r} + \frac{\rho}{r} \sin{\alpha} \frac{\partial}{\partial t}$. 
Its hyperbolic norm is given by 

$$\|v\|_m^h= \rho \sqrt{\frac{\cos(\alpha)^2}{r^2} + \sin(\alpha)^2}.$$

The points $p_j$ are given by $p_j=k_je^{i\beta_j}$ (with $k_1=0$ and $k_j\neq 0$ for $j\geqslant 2$). By Theorem 
\ref{theorem:metric}, the Finsler norm of $v$ is given by:
\begin{equation} \label{eq:finsler-v}
\|v\|_m=\max_j \Re\Bigl(\dfrac{\phi_j(v)}{\phi_j(m)}\Bigr)-\min_j\Re\Bigl(\dfrac{\phi_j(v)}{\phi_j(m)}\Bigr).
\end{equation}
The quantities involved in \eqref{eq:finsler-v} are given by 
$$\dfrac{\phi_j(v)}{\phi_j(m)}=\dfrac{\rho e^{i(\theta+\alpha)}}{re^{i\theta}-k_je^{i\beta_j}},$$
and by direct computations, we observe (see Remark \ref{physics} for a physical interpretation of the following formula).
\begin{itemize}
\item If $j=1$ ( and so $k_1=0$ ),
\begin{equation}\label{eq:expansion1}\dfrac{1}{\rho}\Re\Bigl(\dfrac{\phi_1(v)}{\phi_1(m)}\Bigr)=\dfrac{\cos(\alpha)}{r}.
\end{equation}
\item If $j\geqslant 2$ (and thus $k_j\neq 0$),
\begin{equation}\label{eq:expansionj}\dfrac{1}{\rho}\Re\Bigl(\dfrac{\phi_j(v)}{\phi_j(m)}\Bigr)=-\dfrac{\cos(\theta+\alpha-\beta_j)}{k_j} + O(r)\end{equation}
\end{itemize}

From physical considerations one expects that the most important term in the contribution to the metric near $p_1$ is the one corresponding to $j=1$ if $\alpha\neq \pm \pi/2$ and from another point which is not aligned with $m$ and $p_1$
if $\alpha=\pm \pi/2$.  In the  following  we establish the estimates confirming this intuition.

Let us consider the right-hand side inequality in \eqref{eq:eq}. First, we note that for all $\gamma\in\R$, 
$\cos(\alpha+\gamma)\leqslant |\cos\alpha| +|\sin\alpha|$. Therefore,

$$\underset{j\geqslant 2}{\max}\left(\dfrac{-\cos(\theta+\alpha-\beta_j)}{k_j}\right)\leqslant
\dfrac{|\cos\alpha| +|\sin\alpha|}{\underset{j\geqslant 2}{\min}\, k_j}.$$

Moreover, for any finite  subset $F$ of $\R$, we have $\max(F)-\min(F)\leqslant 2\max|F|$. Provided that $r$ is small 
enough, this implies 
\begin{eqnarray}
 \|v\|_m^h & \leqslant & 2\underset{j}{\max}\left|\Re\dfrac{\phi_j(v)}{\phi_j(m)}\right| \nonumber\\
  & \leqslant & 2\rho\left(\dfrac{|\cos\alpha|}{r}+C(|\cos\alpha|+|\sin\alpha|)\right)\nonumber \\
 & = & \rho\left(\dfrac{C}{r}|\cos\alpha| + C|\sin\alpha|\right)\nonumber\\
& \leqslant & C \rho\left(\dfrac{|\cos\alpha|}{r}+|\sin\alpha|\right) \nonumber\\
&\leqslant & C\rho \sqrt{\dfrac{\cos\alpha^2}{r^2}+\sin^2\alpha} 
\end{eqnarray}

We know consider the left-hand side inequality in \eqref{eq:eq}. By symetry, we may restrict ourselves to the case where 
$\alpha\in[-\pi/2,\pi/2]$. The geometric idea is the following. When $\alpha$ is close to $0$, the $\max$ in \eqref{eq:eq} is reached for $i=1$ 
if $r$ is small enough, due to the $\cos (\alpha)/r$ term. When $|\alpha|$ becomes larger, the $j=1$ term becomes 
less influent, and the Finsler norm of $v$ is obtained from those terms where $j>1$. In the extreme case where 
$\alpha=\pm\pi/2$, the $j=1$ term vanishes.

To make this idea more precise, we first compute for any $j\leqslant 2$
\begin{eqnarray}
\dfrac{1}{\rho}\Bigl(\Re(\dfrac{\phi_1(v)}{\phi_1(m)})-\Re(\dfrac{\phi_j(v)}{\phi_j(m)}) \Bigr)
& = & \dfrac{\cos(\alpha)}{r}-\dfrac{r\cos\alpha -k_j\cos{(\alpha+\theta-\beta_j)}}{|re^{i\theta}-k_je^{i\beta_j}|^2}\nonumber\\
& = & \dfrac{k_j\left(k_j\cos\alpha-r\sin{(\theta-\beta_j)}\sin\alpha\right)}{r|re^{
i\theta}-k_je^{i\beta_j}|^2} \nonumber
\end{eqnarray}
We observe that the latter quantity has the same sign as $k_j\cos\alpha-r\sin{(\theta-\beta_j)}\sin\alpha$. A direct 
resolution shows that it is non-negative if and only if $\alpha$ belongs to an interval $I_j$, which is defined by
$I_j=[-\pi/2,\alpha_j]$ if $\sin(\theta-\beta_j) >0$, by $I_j=[\alpha_j,\pi/2]$ if $\sin(\theta-\beta_j) <0$, where 
$\alpha_j$ is determined by 
$$\tan(\alpha_j)=\dfrac{k_j-r\cos(\theta-\beta_j)}{r\sin{(\theta-\beta_j)}},$$
and $I_j=-[\pi/2,\pi/2]$ if $\sin(\theta-\beta_j) =0$.

As a consequence, the set of values of $\alpha$ for which the max is reached for $j=1$ is a subinterval $I$ of 
$[\pi/2,\pi/2]$ which contains $0$ in its interior.

\begin{itemize}

\item For $\alpha\in I$, we have thus

$$\|v\|_m \geqslant \rho \left(\dfrac{\cos\alpha}{r}-\max\dfrac{2}{k_j}\right)\geqslant\|v\|_m^h.$$

\item For $\alpha$ outside $I$, we have

$$\|v\|_m\geqslant\rho \underset{j,\ell\geqslant 2}{\max}\left(\Re\dfrac{\phi_j(v)}{\phi_j(m)}-\Re\dfrac{\phi_\ell(v)}{\phi_\ell(m)}\right).$$

In view of \eqref{eq:expansionj}, this implies (provided that $r$ is small enough)

$$\|v\|_m\geqslant\rho \underset{j,\ell\geqslant 2}{\max}\Bigl| \dfrac{\cos(\theta+\alpha-\beta_j)}{k_j}-\dfrac{\cos(\theta+\alpha-\beta_\ell)}{k_\ell}\Bigr|-C\rho r.$$

Now, the max in the right hand side of the previous inequality is not zero since the points $p_j$ do not lie on a circle. 
Therefore, making $r$ smaller if necessary, we have 

$$\|v\|_m\geqslant \rho C \geqslant \rho|\sin\alpha|\geqslant C \|v\|_m^h.$$

\end{itemize}
This concludes the proof.
\end{proof}

\begin{remark} \label{physics} One can gain intuition about this metric with a physical analogy. Indeed, the contribution 
of each point $p_i$ in the definition of the Finsler metric $\|v\|_m$ corresponds to a magnetic field ${\bf B}_i$ induced 
by a constant current passing through an infinite line perpendicular to the plane $\C$ at the point $p_i$ 
(see equation \ref{eq:expansion1}).  The magnetic field is tangent to the circles centred at $p_i$ and decreases with the 
inverse of the distance. The force ${\bf F}_i$ on a charged particle at $m$ with velocity $v$ moving on the magnetic field 
${\bf B}_i$ is given by  ${\bf F}_i= v\wedge {\bf B}_i$ and it can be considered a scalar (the vector ${\bf F}_i$ is 
perpendicular to the plane). The Finsler metric is then
$$
\|v\|_m=\max_i{\bf F}_i-\min_j{\bf F}_j.
$$
This suggests other natural infinitesimal metrics as various combinations or means of magnetic forces but the 
previous definition has the advantage of making it clear that  this metric is always  strictly positive if the 
points are not on the same circle or line.
\end{remark}

\section{Real projective line}\label{section:RP1}

In this section, we consider the case where $\Lambda$ is a compact subset of 
$\RP$ and $\Omega$ its complement. The open set $\Omega$ is a union of pairwise disjoint intervals, and 
the distance $d_\Lambda$ can be computed for points in different components of $\Omega$. In that case, $d_\Lambda$ 
has a close connection with the hyperbolic distance on the disc $\partial \Delta$, which is hardly a surprise since 
this distance is induced by the cross-ratio on $\partial\Delta$.



\subsection{Distance between intervals}

Let us consider a closed set $\Lambda \subset \RP$ and its complement $\Omega$. As above, we identify 
$\Lambda$ with a subset of the dual $\R P^1$: to any point $\lambda \in \RP$ corresponds the linear map 
$(x_1,x_2)\longmapsto x_1-\lambda x_2$, which corresponds to the affine map $x\longmapsto x-\lambda$ in 
the affine chart $\{x_2=1\}$. Under this identification, we will sometimes refer to points in $\Lambda$ as forms, 
and, for instance call them $\phi$, $\phi'$... The open subset $\Omega $ is a union of open intervals, its connected 
components, to which we will refer as its \emph{components}. The metric $d_\Lambda$ is invariant under the subgroup 
of $\PGL(2,\R)$ preserving $\Lambda$ and is particularly interesting when this group is large as, for example, when 
$\Lambda$ is the limit set of a Fuchsian group. 

Viewing $\RP$ as the boundary at infinty of the Poincar\'e disc $\Delta$, there is a close relation between the cross-ratio 
on $\RP$ and the hyperbolic metric in $\Delta$ : 

\begin{lemma}\label{lem:cross-hyp}
If $a$, $b$, $c$ and $d$ are pairwise distinct points in $\RP$, the cross ratio satisfies $|[a,b,c,d]|=e^\delta$, where 
$\delta$ is the (hyperbolic) distance between the orthogonal projections of $c$ and $d$ onto the geodesic spanned by $a$ 
and $b$. 
\end{lemma}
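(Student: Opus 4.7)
My plan is to reduce the lemma to a direct computation by normalising the configuration through a Möbius transformation, exploiting the double invariance at hand: the cross-ratio is $\mathrm{PGL}(2,\R)$-invariant, and the induced action of $\mathrm{PGL}(2,\R)$ on the boundary $\RP$ extends to an action by isometries on the hyperbolic disc. Thus both sides of the claimed equality are simultaneously invariant under the same group, and we may freely move $(a,b,c,d)$ to a convenient position before computing.

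Working in the upper half-plane model, I would use an element of $\mathrm{PGL}(2,\R)$ to send $a\mapsto 0$ and $b\mapsto \infty$, so that the geodesic $(ab)$ becomes the positive imaginary axis; the points $c,d$ become real numbers $\tilde c,\tilde d$. In this normalisation, the hyperbolic geodesics orthogonal to the imaginary axis are the Euclidean semicircles centred at the origin, and the (asymptotic) geodesic through $\tilde c$ orthogonal to $(ab)$ meets the axis at the point $i|\tilde c|$. This is the natural meaning of the orthogonal projection of the boundary point $c$ onto $(ab)$, and analogously for $d$. The hyperbolic distance between $i|\tilde c|$ and $i|\tilde d|$ along the imaginary axis is then $\bigl|\ln|\tilde c|-\ln|\tilde d|\bigr|$.

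It remains to compute the cross-ratio in these coordinates: applying \eqref{eq:usual-crossratio} with $a=0$ and passing to the limit $b\to\infty$ yields $[a,b,c,d]=\tilde c/\tilde d$, so that
\[
\bigl|\ln|[a,b,c,d]|\bigr|=\bigl|\ln|\tilde c|-\ln|\tilde d|\bigr|=\delta,
\]
which is the content of the lemma (up to the sign of $\delta$, fixed by an orientation convention on the ordering of $c,d$). There is essentially no obstacle: the only point one must check is that the ``projection of a boundary point'' is well-defined, which amounts to the elementary fact that for any hyperbolic geodesic $\gamma$ and any boundary point $p\notin\overline{\gamma}$, there is a unique geodesic through $p$ perpendicular to $\gamma$, namely the one asymptotic to $p$ meeting $\gamma$ orthogonally.
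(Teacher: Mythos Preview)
Your proof is correct and follows essentially the same route as the paper's: normalise via a M\"obius transformation so that the geodesic $(ab)$ becomes the imaginary axis in the upper half-plane, then compute both the cross-ratio and the hyperbolic distance between the footpoints directly. The only cosmetic differences are that the paper sends $(a,b)\mapsto(\infty,0)$ rather than $(0,\infty)$ and further normalises $c=1$, and that you are slightly more explicit than the paper about the sign ambiguity in $\delta$.
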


\begin{proof}
 Applying an element of PGL(2,$\R$), which preserves the cross-ratio, we may assume that $a=\infty$, $b=0$, $c=1$ and $d=x>0$.
Then the projections of $c$ and $d$ onto the geodesic $(ab)$ are $i$ and $ix$, which are a distance $\log (x)$ apart. The cross 
ratio $[a,b,c,d]$ is equal to $x$.
\end{proof}

In order to understand the metric $d_\Lambda$ in this case, we first start with the following lemma, 
which shows that the metric between points in $\Omega$ depends only on the (at most) two components 
containing the points.  

\begin{lemma} \label{lem:dist-RP1}
\begin{enumerate}
 \item The metric $d_\Lambda$ restricts to the Hilbert metric on each component of $\Omega$.
 \item If $\omega$ and $\omega'$ belong to distinct components $I$ and $I'$ of $\Omega$, then the max 
defining the distance $d_\Lambda(\omega,\omega')$ is realised for $\phi$ and $\phi'$ associated to boundary 
points of $I'$ and $I$ respectively.
\end{enumerate}
\end{lemma}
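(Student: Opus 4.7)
The plan is to decouple the two arguments of the cross-ratio via a single M\"obius function. Choose an affine chart in which $\omega,\omega'$ correspond to real numbers $x<y$, and identify each $\phi\in\Lambda$ with its dual point $p\in\RP$ (so $\bphi(z,1)=z-p$). Formula (\ref{eq:comput-crossratio}) then yields the factorisation
\[
  [\phi,\phi',\omega,\omega']\;=\;\frac{g(p)}{g(p')}, \qquad g(p):=\frac{x-p}{y-p}.
\]
Since $p,p'$ range independently in $\Lambda$, we have $d_\Lambda(\omega,\omega')=\log\bigl(\max_{\Lambda}|g|\,/\,\min_{\Lambda}|g|\bigr)$, and both parts of the lemma reduce to locating the extrema of $|g|$ on $\Lambda$. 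The map $g$ is a real M\"obius transformation with $g'(p)=(x-y)/(y-p)^2<0$, positive off $(x,y)$ and negative on $(x,y)$, with $g(x)=0$ and $g(y)=\infty$; this drives the case analysis.

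For part (1), by projective invariance (Proposition~\ref{prop:invariance}) we may assume that $\omega,\omega'$ lie in a bounded component $I=(a,b)$ with $a<x<y<b$. A direct computation gives $g(a)\in(0,1)$ and $g(b)>1$. By monotonicity, $g$ maps the closed arc $\RP\setminus I$ (which passes through $\infty$) onto $[g(a),g(b)]$, where it is everywhere positive. Since $\Lambda\subset\RP\setminus I$ and $\{a,b\}\subset\Lambda$, we obtain $\max_\Lambda|g|=g(b)$, attained at $b$, and $\min_\Lambda|g|=g(a)$, attained at $a$. This yields $d_\Lambda(\omega,\omega')=\log(g(b)/g(a))$, which is the Hilbert distance on $I$.

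For part (2), a further projective transformation brings the two components into bounded position: $I=(a_1,a_2)$ and $I'=(b_1,b_2)$ with $a_1<x<a_2<b_1<y<b_2$. Then $\RP\setminus(I\cup I')$ splits into two closed arcs: the \emph{outer} arc $A_{\mathrm{out}}=(-\infty,a_1]\cup\{\infty\}\cup[b_2,+\infty)$, on which $g$ is positive with image $[g(a_1),g(b_2)]$ (with $g(a_1)\in(0,1)$, $g(b_2)>1$), and the \emph{middle} arc $A_{\mathrm{mid}}=[a_2,b_1]$, on which $g$ is negative with $|g|$ ranging in $[-g(a_2),-g(b_1)]$. All four of $a_1,a_2,b_1,b_2$ lie in $\Lambda$, so
\[
  \max_\Lambda|g|=\max\bigl(g(b_2),\,-g(b_1)\bigr), \qquad \min_\Lambda|g|=\min\bigl(g(a_1),\,-g(a_2)\bigr).
\]
The maximum is therefore attained at an endpoint of $I'$ and the minimum at an endpoint of $I$, which is exactly the assertion.

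The only delicate aspect is tracking the cyclic order of the arcs on $\RP$ and the signs of $g$ on each, particularly when handling the case in which $I$ or $I'$ would a priori contain $\infty$; both are tamed by the initial projective reduction. Beyond this routine bookkeeping I do not foresee any real obstacle.
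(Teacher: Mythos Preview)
Your proof is correct and follows essentially the same approach as the paper: factor the cross-ratio as $g(p)/g(p')$ and locate the extrema of $|g|$ on $\Lambda$. The paper streamlines the analysis by sending $\omega,\omega'$ to $0,\infty$, which reduces $|g(p)|$ to $|p|$ and makes the location of the extrema immediate, whereas you keep $\omega,\omega'$ finite and carry out the monotonicity and sign bookkeeping for the M\"obius map $g(p)=(x-p)/(y-p)$ directly; the underlying argument is the same.
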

 
\begin{proof}
These facts are very natural geometrically in view of Lemma \ref{lem:cross-hyp} (see Figure \ref{fig:proj}).  
Choose an affine chart such that the two points in $\Omega$ are $\omega = 0$ and $\omega' = \infty$. 
Then we have:
\begin{eqnarray}
d_\Lambda (\omega,\omega') &=& \underset{\lambda, \lambda'\in\Lambda} \max \log 
\left|\dfrac{(\omega-\lambda)(\omega'-\lambda')}{(\omega-\lambda')(\omega'-\lambda)}\right|\nonumber\\
& = & \underset{\lambda, \lambda'\in\Lambda}\max{\log}\left|\dfrac{\lambda}{\lambda'}\right|\label{log}
\end{eqnarray}
The component $I$ of $\Omega$ containing $\omega = 0$ is a neighborhood of $0$ and contains no point of $\Lambda$. 
Similarly, the component $I'$ containing $\omega' = \infty$ is a neighborhood of $\infty$ that 
contains no point of $\Lambda$. In order to maximize \eqref{log}, one 
should choose $\lambda$ as big as possible in absolute value, 
i.e. one of the endpoints of $I'$ and $\lambda'$ as small as possible, i.e. one of the 
endpoints of $I$. This proves the second item. In case $I$ and $I'$ are equal, then the max is attained 
when $\lambda$ and $\lambda'$ are distinct boundary points. This means that the distance between $\omega$ 
and $\omega'$ is just their Hilbert distance. \end{proof}

\begin{figure}[h!]
\begin{center}
\scalebox{0.5}{\includegraphics{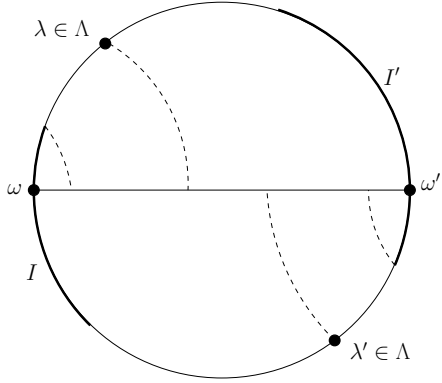}}
\end{center}
\caption{Lemma \ref{lem:dist-RP1} : $I$ and $I'$ are components of $\Omega$, points of $\Lambda$ that are not 
endpoints of $I$ and $I'$ give projections onto the geodesic $(\omega,\omega')$ that are closer than those of the 
endpoints of $I$ and $I'$. \label{fig:proj}}
\end{figure}

Given two components $I$ and $I'$ of which closures are disjoint, the infimum of $d_\Lambda(x,x')$ 
over $x\in I$ and $x'\in I'$ is not $0$: as $\Omega$ is the complement of $\Lambda$, the distance $d_\Lambda$ 
on $\Omega$ is proper and the infimum is attained. Hence we define the distance $d_\Lambda(I,I')$ between 
$I$ and $I'$  to be this infimum. The following lemma gives a beautiful geometrical interpretation of this distance, 
which is not surprise in view of Lemma \ref{lem:cross-hyp}.

\begin{lemma}
The distance between two components $I$ and $I'$ in $\Omega$ is given by the distance 
between the two geodesics $\gamma$ and $\gamma'$ in the hyperbolic space where 
the endpoints of $\gamma$ are the endpoints of $I$ and those of $\gamma'$ are the endpoints of $I'$.
\end{lemma}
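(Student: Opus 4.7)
The plan is to normalize the pair $(\gamma,\gamma')$ by a projective transformation to a model configuration in the upper half-plane, evaluate $d_\Lambda$ directly at the feet at infinity of the common perpendicular for the upper bound, and then establish a matching lower bound by exploiting the $z\mapsto -z$ symmetry of the normalized picture.

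First I would place the common perpendicular $\ell_0$ of $\gamma,\gamma'$ on the imaginary axis of the upper half-plane. Since $I$ and $I'$ are disjoint arcs with disjoint closures, $\gamma$ and $\gamma'$ are ultraparallel, so $\ell_0$ exists and is unique; its endpoints at infinity $\omega_0,\omega_0'$ lie one in $I$ and one in $I'$. Sending $(\omega_0,\omega_0')$ to $(0,\infty)$ via an element of $\PGL(2,\R)$, justified by Proposition~\ref{prop:invariance}, turns $\gamma$ and $\gamma'$ into semicircles centered at $0$ of radii $R_1<R_2$, so that $\partial I=\{\pm R_1\}$, $\partial I'=\{\pm R_2\}$, and $d(\gamma,\gamma')=\log(R_2/R_1)$.

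For the upper bound, a direct computation gives $[\pm R_2,\pm R_1,0,\infty]=\pm R_2/R_1$; the four cross-ratios relevant to Lemma~\ref{lem:dist-RP1}(2) thus all have modulus $R_2/R_1$, and therefore $d_\Lambda(\omega_0,\omega_0')=\log(R_2/R_1)$, giving $d_\Lambda(I,I')\le d(\gamma,\gamma')$.

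The main work is the matching lower bound. The idea is to pair the four boundary points into two antipodal pairs using the $z\mapsto -z$ symmetry of the normalized configuration. For arbitrary $\omega\in I$ and $\omega'\in I'$, I would compute the product
\[
[R_2,R_1,\omega,\omega']\cdot[-R_2,-R_1,\omega,\omega']=\frac{(R_2^2-\omega^2)(\omega'^2-R_1^2)}{(R_1^2-\omega^2)(\omega'^2-R_2^2)},
\]
and verify by two elementary monotonicity checks on $I\times I'$ that the product is bounded below by $(R_2/R_1)^2$. The maximum of the moduli of these two cross-ratios is then at least $R_2/R_1$, so Lemma~\ref{lem:dist-RP1}(2) yields $d_\Lambda(\omega,\omega')\ge\log(R_2/R_1)$, closing the argument.

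The main obstacle I expect is identifying the correct pair of cross-ratios for the lower bound: a naive choice does not simplify cleanly in $\omega,\omega'$, whereas the antipodal pairing factors through the invariants $\omega^2$ and $\omega'^2$ and reduces the estimate to one-variable monotonicity in each.
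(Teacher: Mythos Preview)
Your proposal is correct and follows the same normalization as the paper: both send the common perpendicular of $\gamma,\gamma'$ to the imaginary axis, so that $I=(-R_1,R_1)$ and $I'=\RP\setminus[-R_2,R_2]$ (the paper sets $R_1=1$, $R_2=a$), and both identify $(0,\infty)$ as the distinguished pair with $d_\Lambda(0,\infty)=\log(R_2/R_1)=d(\gamma,\gamma')$. The one substantive difference lies in the lower bound. The paper simply asserts that the minimum of $d_\Lambda(\omega,\omega')$ over $I\times I'$ is attained at $(0,\infty)$ and refers to a figure, the implicit justification being the geometric interpretation of Lemma~\ref{lem:cross-hyp} in terms of orthogonal projections onto the geodesic $(\omega\omega')$. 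You instead give a self-contained algebraic argument: pairing the antipodal cross-ratios $[R_2,R_1,\omega,\omega']$ and $[-R_2,-R_1,\omega,\omega']$ so that their product depends only on $\omega^2$ and $\omega'^2$, and then checking that each factor of
\[
\frac{R_2^2-\omega^2}{R_1^2-\omega^2}\cdot\frac{\omega'^2-R_1^2}{\omega'^2-R_2^2}
\]
is monotone on its domain with infimum $R_2^2/R_1^2$ and $1$ respectively. This is a valid and arguably tidier justification of the step the paper leaves to the picture; the overall route is the same.
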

\begin{proof}
One can arrange, up to the action of $\PGL(2,\R)$, the two intervals to be 
$]-1,1[$ and the complement of $[-a,a]$ (with $1<a$). In this case, the hyperbolic distance 
between the two geodesics is $\log(a)$ (it is the hyperbolic distance between $i$ and $ia$).
One can then compute that the minimal distance between points in both intervals is attained for 
$0$ and $\infty$ and is precisely $d(0,\infty)=\ln [\infty,0,1,a]=\log a$. Again, this can be easily seen more 
geometrically (see Figure \ref{fig:dist-intervals}).
\end{proof}

\begin{figure}[h!]
\begin{center}
\scalebox{0.5}{\includegraphics{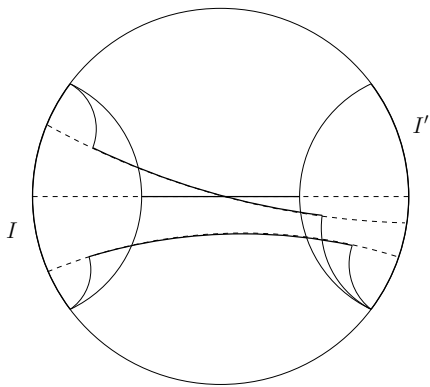}}
\end{center}
\caption{Lemma \ref{lem:dist-RP1} : the distance between $I$ and $I'$ is realised at the common orthgonal of 
the associated geodesics. \label{fig:dist-intervals}}
\end{figure}

Remark that $[a,-a,-1,1]=\left (\frac{1+a}{1-a}\right)^2$. Therefore, the distance $d$ between the two intervals 
$I=]-a,a[$  and the complement of $I'=\RP\setminus[-1,1]$ is related to the cross ratio of the four endpoints by 
the relation
$
[a,-a,-1,1]=\coth^{2}{\frac{d}{2}}.
$
Reading backwards and using the invariance of the distance by projective transformation, we get that 
the distance $d_\Lambda(I,I')$ between two intervals $I = ]a,b[$ and $I' = ]x,y[$ 
(with $a<b<x<y$) is then
$$
d_\Lambda(I,I') = 2\tanh(\sqrt{[a,b,x,y]}).
$$

This distance leads to the definition of a measure on $\RP$ associated to a closed set $\Lambda \subset  \RP$ 
and a chosen component of $\RP\setminus \Lambda$.

\begin{definition}
 Let $\Lambda\subset \RP$ be a closed set and $\Omega$ its complement. We denote by $\mathcal{A}_\Lambda$ the 
sigma algebra generated by components of $\Omega$ and Borel sets in $\Lambda$.
\end{definition}

In other words , a measurable set $E$ is an union 
$$\left(\underset{I'\textrm{ component of }\Omega} \bigcup I'\right) \cup B
\mbox{, where $B$ is a Borel subset of $\Lambda$.}$$

For any component $I$ of $\Omega$, we denote by $\gamma_I$ the geodesic in $H^2_\R$ whose 
endpoints are the same as those of $I$. Then the hyperbolic length induces a measure on $\Lambda$, 
obtained by pulling back the hyperbolic length element along $\gamma_I$ by the orthogonal projection.
We denote this measure by $\nu_I$.

\begin{definition}\label{def:measure}
The measure on $\mathcal{A}_\Lambda$ associated with $I$, denoted by $\mu_{\Lambda,I}$ is defined by
\begin{itemize}
 \item For any component $I'$ of $\Omega$, $\mu_{\Lambda,I}(I')=2\ln (\coth \frac{d}{2})$ where $d$ is 
the distance between $I$ and $I'$.
 \item For any Borel set $B$ in $\Lambda$ $\mu_{\Lambda,I}(B)=\nu_I(B)$.
\end{itemize}
\end{definition}

As an example, if the component $I$ is $(-\infty,0)$, then $\nu_I$ is the measure on Borel sets of $\R_+$ given by 
$d\nu_I=\frac{dx}{x}$. In that case the geodesic $\gamma_I$ is the one connecting $0$ to $\infty$ in the upper 
half-plane. In $E\in\mathcal{A}_\Lambda$ is given by $E=\left(\cup I'\right) \cup B$, then
 
$$\mu_{\Lambda,I}(E)= 2 \sum_{I'} \ln \left(\coth\left(\frac{d_\Lambda(I,I')}{2}\right)\right) + \int_B \frac{dx}{x}. $$



\subsection{Self-similar closed sets in $\RP$ and Basmajian formula}
In this section, we revisit the famous Basmajian formula for surfaces with boundary from the point of view 
of  the metric $d_\Lambda$, in the case where $\Lambda$ is a self-similar closed set. 
We refer the reader to Basmajian's work \cite{Basmajian}, or to the expository articles \cite{Calegari,McShane}, for more 
details. The general form of the Basmajian formula relates the orthospectrum  of a hyperbolic manifold with geodesic boundary 
to the area of the geodesic boundary. We restrict here to the case of surfaces.  We propose here a slight generalization for 
$\Lambda$ not the full limit set of a Fuchsian group, but just some closed set with self-similarity properties.

\begin{definition}
A  subset  $\Lambda\subset \RP$ is \emph{projectively self-similar} if 
there exists a finite family of projective maps $\{f_s\}$ such that
$\Lambda=\bigcup_s f_s(\Lambda)$. The maps $f_s$ are called 
\emph{self-similarities} of $\Lambda$.
\end{definition}

One has to keep in mind  the fundamental example of the limit set of a Fuchsian group.   
In that case, the family can be reduced to a unique map. Another example is given by the 
usual triadic Cantor set in the interval $[0,1]$ to which we add the point $\infty$. In 
this case, the set of self-similarities are the contractions of 
ratio $\frac 13$ and center $0$ or $1$. Both these transformations fix $\infty$.

Set, as before $\Omega = \RP \setminus\Lambda$ and suppose that there exists a 
component $I = (a,b)$ of $\Omega$ which is preserved under a self-similarity $f$. The map $f$ is a 
hyperbolic element fixing $a$ and $b$. Choose an interval $D = [x,f(x)]\subset  \RP \setminus I$ such 
that $x \in \Lambda$. It is a fundamental domain for the action of $f$ on $\RP \setminus I$.  
The point $f(x)$ is also in $\Lambda$ by invariance. In the example of the triadic Cantor set, 
the interval $I$ is $(\infty,0)$ for the contraction about $0$ and $(1,\infty)$ for the second 
contraction.

The self-similarities act on the set of components of $\Omega$, and in turn, act on the set of lengthes between 
components. We call {\it orthospectrum} of $\Lambda$ the set of distances between components of $\omega$ modulo 
the action of self-similarities. The following result relates the ratio of contraction of the self-similarity, 
or rather its translation length in the hyperbolic space, to the measure $\mu_{\Lambda,I}$ of $D$, recovering the 
a version of the Basmajian formula.

\begin{theorem}\label{theorem:basmajian} Let $\Lambda\subset  \RP$ be  closed and preserved under a 
hyperbolic element $f\in \PGL(2,\R)$ with translation length $l$, let $\Omega$ be its complement. 
Suppose, furthermore, that $f$ preserves a component  $I_f\subset\Omega$.
Let $D$ be a fundamental domain as above. Then, we have:
$$
l=\mu_{\Lambda_f}(D).
$$
where $\mu_{\Lambda_f}$ is the canonical measure defined by $I_f$.
\end{theorem}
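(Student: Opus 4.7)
The plan is to interpret $\mu_{\Lambda,I_f}(D)$ as the total hyperbolic length of the orthogonal projection of $D$ onto $\gamma_{I_f}$, and then to invoke the fact that $f$ acts on $\gamma_{I_f}$ by a translation of length $l$.

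After conjugating by an element of $\PGL(2,\R)$, I normalise so that the endpoints of $I_f$ are $0$ and $\infty$ and $I_f=(-\infty,0)$; then $\gamma_{I_f}$ is the positive imaginary axis in the upper half-plane and $f$ acts on $\RP$ as $y\mapsto e^{l}y$ (possibly after replacing $f$ by $f^{-1}$). The orthogonal projection $\pi:\RP\setminus\{0,\infty\}\to\gamma_{I_f}$ is then $y\mapsto i|y|$, so the hyperbolic length element on $\gamma_{I_f}$ pulls back to $dx/x$ on $\R_+$, which is precisely the measure $\nu_{I_f}$ of Definition \ref{def:measure}. In particular $\nu_{I_f}(D\cap\Lambda)$ equals the hyperbolic length of $\pi(D\cap\Lambda)$.

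For each component $I'=(c,d)$ of $\Omega$ contained in $D$ (so $0<c<d<\infty$), the direct computation $[\infty,0,c,d]=d/c$ combined with the relation $[a,b,x,y]=\coth^{2}(d_\Lambda(I,I')/2)$ established just before Definition \ref{def:measure} gives
\[
 \mu_{\Lambda,I_f}(I')=2\ln\bigl(\coth(d_\Lambda(I_f,I')/2)\bigr)=\ln(d/c),
\]
which is also the hyperbolic length of the projected segment $\pi(I')=[ic,id]\subset\gamma_{I_f}$. Thus each component of $\Omega$ inside $D$ contributes to $\mu_{\Lambda,I_f}$ exactly the hyperbolic length of its projection.

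Since $x\in\Lambda$ and $\Lambda$ is $f$-invariant, both endpoints of $D$ lie in $\Lambda$, so every component of $\Omega$ that meets $D$ is entirely contained in $D$; consequently the segments $\pi(I')$ for components $I'\subset D$ together with $\pi(D\cap\Lambda)$ tile $\pi(D)$ up to a set of measure zero. Summing the two contributions identified above yields that $\mu_{\Lambda,I_f}(D)$ equals the hyperbolic length of $\pi(D)$. Finally, $\pi$ intertwines the action of $f$ on $\RP\setminus\{0,\infty\}$ with its action on $\gamma_{I_f}$ (because $f$ is an isometry preserving $\gamma_{I_f}$), so $\pi(D)$ is the subarc of $\gamma_{I_f}$ from $\pi(x)$ to $f(\pi(x))$, whose length is by definition the translation length $l$. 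The main subtlety is the tiling claim above, which is where the hypothesis $x\in\Lambda$ is genuinely used.
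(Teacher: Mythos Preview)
Your proof is correct and follows essentially the same route as the paper: normalise so that $I_f=(-\infty,0)$ and the axis $\gamma_{I_f}$ is the positive imaginary axis, identify $\nu_{I_f}$ with $dx/x$, compute that each component $I'=(c,d)\subset D$ contributes $\ln(d/c)=2\ln\bigl(\coth(d_\Lambda(I_f,I')/2)\bigr)$, and observe that the translation length $l$ equals the integral of $dx/x$ over the fundamental interval $D$. Your version is in fact slightly more careful than the paper's in making explicit the tiling argument and the role of the hypothesis $x\in\Lambda$.
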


Remark that the formula above can be written as

$$
l=S'+\int_{I_\Lambda} d\nu_I 
$$

with 

$$S'= \sum 2\ln \left(\coth \frac{d(I,I')}{2}\right),$$ 

where the sum ranges over components $I'$ of $\Omega$ inside $D$. In the case of limit 
sets of Fuchsian groups the continuous measure does not appear as the limit set always 
has measure zero and the above formula reduces to the Basmajian's identity for hyperbolic 
surfaces. Observe also that there exists one such identity for each hyperbolic element 
preserving a component of the complement $\RP\setminus \Lambda$. Note 
moreover that if a self-similarity with fixed points $a\neq b$ does not fix
a component, we may split $\Omega$ and $\Lambda$ in two according to the 
sides of $(a,b)$. We retrieve then two formulas for $l$.

\begin{proof}
By projective invariance, we may assume that $I_f$ is $(-\infty, 0)$, so the axis of $f$ is 
the vertical geodesic above $0$ in the upper half-plane.
Let $D_\Lambda=D\cap \Lambda$ and $D'_\Lambda=D\setminus D_\Lambda = \Omega \cap D$.  
For each component $I_i$ of $D'$, one computes its distance $d_i$ to $I_f=(-\infty,0)$.  
This is  the \emph{orthospectrum} of $\Lambda$ with respect to the 
interval $(-\infty,0)$ along the fundamental domain $D$. 
Each distance $d_i$ is related to a quadrilateral in the hyperbolic  half plane.  
The proof then is then a simple observation guided by Figure \ref{fig:basmajian}.  
Indeed, the translation length $l$ of the hyperbolic element $f$ 
is the integral of the hyperbolic Lebesgue measure $d\nu$ over a fundamental interval for 
the action of $f$ by translation along its axis. In the present case, it is the 
vertical geodesic in the hyperbolic upper half plane, and so. 
As in the example following Definition \ref{def:measure}, the integral is computed
 as a sum of two terms:  one term corresponds to 
the integration of $dx/x$ along the closed set $D_\Lambda = D\cap \Lambda$ and 
the other corresponds to the integration over the components in $D' = D\cap \Omega$. For each such 
component $I_i = (x,y)$ (with $x<y$)the integral of $dx/x$ on $I_i$ is computable and relates to the 
distance $d_i := d_\Lambda(I,I_i)$ by (compare \cite{Basmajian}):
\begin{eqnarray*}
\int_{I_i} dx/x   = & \ln \dfrac yx\\
 = & \log [\infty, 0, x,y]\\
 = & 2\log \left(\coth \left(\frac{d_i}{2}\right)\right)\\
 = & \mu_{\Lambda,I}(I_i)
\end{eqnarray*}
This proves the result.

\begin{figure}[h!]
\begin{center}
\scalebox{0.5}{\includegraphics{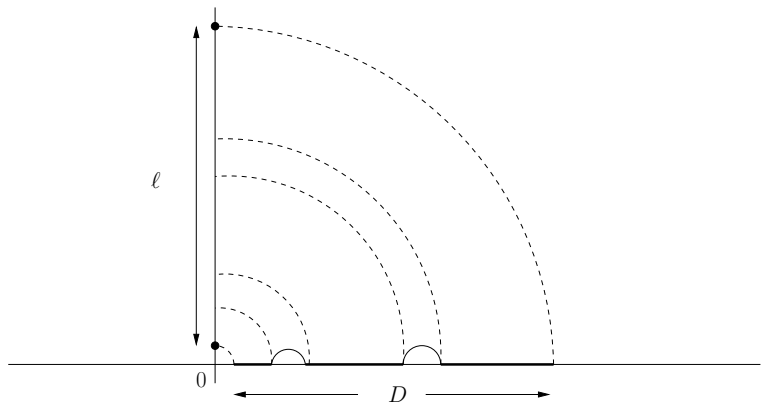}}
\end{center}
\caption{Basmajian's formula. The marked points on the vertical axis bound a fundamental 
interval for the action of $f$ on its axis. The translation length of $f$ is $\ell$. \label{fig:basmajian}}
\end{figure}

      
       


\end{proof}

\subsection{Quasi-M\"obius maps and quasi-isometry}

Originating from \cite{CooperPignataro}, there is a literature
about self-similar Cantor sets up to bi-Lipschitz transformations.
In the case that is relevant to us, Cooper and Pignataro \cite{CooperPignataro}
classify the self-similar Cantor subsets of $[0,1]$ up to order
preserving bi-Lipschitz maps (or in another language, quasi-isometries).

We remark here that if two closed sets $\Lambda$ and $\Lambda'$ are 
quasi-M\"obius -- 
the right notion extending quasi-isometry as we will see --
then their complement $(\Omega,d_\Lambda)$ and $(\Omega',d_\Lambda')$ 
are quasi-isometric.

Let us begin by recalling a few definitions. We fix a cyclic orientation
on the real projective line $\RP$ and every affine coordinates we will
consider will respect this ordering. 
\begin{definition}[\cite{Vaisala}]
An order preserving invertible map $F$ between to subsets $E$ and $E'$ of 
$\RP$ is \emph{quasi-M\"obius} if there is a constant $K\geq 1$ such that for any $4$-tuples 
$(e_0,e_1,e_2,e_3)$ of distinct elements of $E$, we have:
$$\frac 1K \leq \frac{[F(e_0),F(e_1),F(e_2),F(e_3)]}{[e_0,e_1,e_2,e_3]}\leq K.$$
\end{definition}
Note that this class of maps has close ties with the quasi-symmetric maps
\cite{Vaisala}, that we will not use.

Suppose you have an order-preserving map $f$ between two Cantor 
subsets of $[0,1]$ -- in fact, any 
two compact subsets of $\R$.
Define $\Lambda = C\cup\{\infty\}$ and $\Lambda' = C'\cup\{\infty\}$. 
Let $F$ be the extension
of $f$  to the map between $\Lambda$ and $\Lambda'$ which fixes $\infty$. 
Then it is easy to check 
that $f$ is bi-Lipschitz if and only if $F$ is
quasi-M\"obius. The following theorem explains that such a 
quasi-M\"obius map extends to a quasi-M\"obius
map of $\RP$. It implies in turn that the complements $\Omega,d_\Lambda$ 
and $\Omega',d_{\Lambda'}$ are
quasi-isometric.
\begin{theorem}\label{RP1:qi}
Let $\Lambda$ and $\Lambda'$ be two closed subset of $\RP$ and denote 
by $\Omega$ and $\Omega'$ their complements.

Then any order-preserving quasi-M\"obius invertible map $F : \Lambda \to \Lambda'$ extends 
to a quasi-M\"obius invertible map $\bar F: \RP \to \RP$. Moreover, the restriction of $\bar F$ to 
$\Omega$ realizes a quasi-isometry between $(\Omega,d_\Lambda)$ and $(\Omega',d_{\Lambda'})$.
\end{theorem}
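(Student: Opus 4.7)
The proof naturally splits into two parts: first, construct the extension $\bar F$ and show it is quasi-Möbius; then deduce the quasi-isometry statement from the quasi-Möbius property using the formula for $d_\Lambda$ in terms of cross-ratios.

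For the extension, since $F\colon \Lambda \to \Lambda'$ is order-preserving and invertible, it induces an order-preserving bijection between the set of complementary arcs of $\Lambda$ in $\RP$ and the set of complementary arcs of $\Lambda'$. For each arc $I = (a,b)$ of $\Omega$, write $I' = (F(a),F(b))$ for the corresponding arc of $\Omega'$, and extend $F$ on $I$ by the unique Möbius transformation sending the triple $(a,b,c_I)$ to $(F(a),F(b),c'_{I'})$, where $c_I \in I$ and $c'_{I'} \in I'$ are chosen reference points (for instance, midpoints in a fixed affine chart). This defines an order-preserving bijection $\bar F\colon \RP \to \RP$ restricting to $F$ on $\Lambda$ and to a Möbius map on each arc.

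The technical core is to check that $\bar F$ is quasi-Möbius, i.e.\ that $|[\bar F(p_1),\bar F(p_2),\bar F(p_3),\bar F(p_4)]\,/\,[p_1,p_2,p_3,p_4]|$ is bounded away from $0$ and $\infty$ uniformly in the 4-tuple. One distinguishes cases according to how the points distribute between $\Lambda$ and the arcs of $\Omega$. When all four points are in $\Lambda$, the bound is exactly the quasi-Möbius hypothesis on $F$. When all four points lie in a single arc, $\bar F$ restricts to a Möbius map there, so the cross-ratio is preserved exactly. In the mixed cases, one reduces to the previous two using the cocycle identities from Proposition \ref{prop:birapport-identities}: for a point $p\in I=(a,b)$, the identity $[q_1,q_2,q_3,p] = [q_1,q_2,q_3,a]\cdot [q_1,q_2,a,p]$ (and iterations thereof) allows one to peel off factors involving only $\Lambda$-points from factors in which the ``moving'' point $p$ is paired with endpoints of its arc; for the latter factors, the Möbius form of $\bar F|_I$ and control of the Möbius distortion in terms of cross-ratios with the reference point $c_I$ give the required bound. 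This step is the main obstacle, and a clean alternative is to quote the classical extension theorem for quasi-Möbius maps between subsets of a circle in the spirit of Väisälä.

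Granted that $\bar F$ is $K$-quasi-Möbius, the quasi-isometry statement is immediate. By Lemma \ref{lem:dist-RP1} the distance admits the formula
\[
d_\Lambda(\omega,\omega') = \max_{\phi,\phi'\in\Lambda} \ln|[\phi,\phi',\omega,\omega']|,
\]
and since $\bar F$ restricts to the bijection $F\colon\Lambda\to\Lambda'$,
\[
d_{\Lambda'}(\bar F(\omega),\bar F(\omega')) = \max_{\phi,\phi'\in\Lambda} \ln\bigl|[\bar F(\phi),\bar F(\phi'),\bar F(\omega),\bar F(\omega')]\bigr|.
\]
The quasi-Möbius bound applied pairwise gives $\bigl|\ln|[\bar F(\phi),\bar F(\phi'),\bar F(\omega),\bar F(\omega')]| - \ln|[\phi,\phi',\omega,\omega']|\bigr|\le \ln K$ uniformly in $(\phi,\phi')$, and taking the maximum yields $|d_{\Lambda'}(\bar F(\omega),\bar F(\omega')) - d_\Lambda(\omega,\omega')|\le \ln K$. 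Thus $\bar F|_\Omega$ is a $(1,\ln K)$-rough isometry, hence a quasi-isometry, concluding the proof.
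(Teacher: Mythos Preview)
Your deduction of the quasi-isometry from the quasi-M\"obius property of $\bar F$ is correct and matches the paper's argument essentially verbatim (the paper obtains an additive constant $8\ln K$ rather than $\ln K$, but the mechanism is identical).

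The extension step, however, is where your approach diverges from the paper's and where your argument is incomplete. You extend by a M\"obius map on each complementary arc and then propose a case analysis via the cocycle identities to verify the global quasi-M\"obius bound. You correctly identify this as ``the main obstacle'' but do not carry it out: the mixed cases (points spread over $\Lambda$ and several distinct arcs) do not reduce cleanly to the two easy cases, because after one application of the cocycle identity you still face cross-ratios mixing $\Lambda$-points with arc points, and the M\"obius pieces on different arcs are unrelated to one another. Offering to quote a V\"ais\"al\"a-type extension theorem instead is an admission that this part is not done.

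The paper sidesteps this difficulty entirely by a normalization trick: after composing with M\"obius maps one may assume $0,1,\infty\in\Lambda\cap\Lambda'$ and that $F$ fixes them. The quasi-M\"obius hypothesis, applied to cross-ratios of the form $[\infty,0,1,t]$ and $[\infty,t,0,t']$, then shows that $F$ is $K^2$-bi-Lipschitz on $\R$. Now extend by the \emph{affine} bijection on each bounded component (and by a translation on each component touching $\infty$): an affine extension of a bi-Lipschitz map is trivially bi-Lipschitz with the same constant, and a $K^2$-bi-Lipschitz map of $\R$ is automatically $K^8$-quasi-M\"obius by a direct estimate on the cross-ratio formula. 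This replaces your case analysis by a one-line observation, and is the idea your argument is missing.
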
  

\begin{proof}
We normalize $\Lambda$, $\Lambda'$ and $F$ in the following way: we suppose,
up to a M\"obius transformation that $\Lambda$ and $\Lambda'$ contain 
$0$, $1$ and $\infty$ and that $F$ fixes $0$, $1$,
$\infty$. Note that, as the order is preserved, two points $x,x'$ 
in $\Lambda$ are the endpoints of 
a component $(x,x')$ of $\Omega$ if and only if $(F(x),F(x'))$ 
is a component of $\Omega'$. Using the almost invariance of 
cross-ratios of the form $[\infty,0,1,t]$ 
for $t\neq 0\in \Lambda$, we get that $$\frac 1K\leq \frac{F(t)}{t}\leq K.$$ 
Using now cross-ratios $[\infty,t,0,t'] = \frac{t-t'}{t}$ for 
$t,t'\neq 0\in \Lambda$, we get that:
$$\frac 1K\leq \frac{F(t)-F(t')}{t-t'}\frac{t}{F(t)}\leq K.$$ 
The first inequality grants that $F$ is $K^2$-bi-Lipschitz in 
restriction to $\R$.

Define now the extension $\bar F$ in the following way: $\bar F = F$ on $\Lambda$. On each bounded 
component $(x,x')$,
$\bar F$ is the unique affine bijection between $(x,x')$ and $(F(x),F(x'))$. 
On an unbounded component
$(x,\infty)$ or $(\infty,x')$, then $\bar F$ is the unique translation
bijection between this component and its image. Note that $\bar F_{|\R}$ 
is an affine extension of a $K^2$-bi-Lipschitz map: it is itself 
$K^2$-bi-Lipschitz. We claim that $\bar F$ is a quasi-M\"obius map
with constant $K^8$. 

Consider indeed $4$ distinct points $(x_0,x_1,x_2,x_3)$ in $\RP$. 
Up to transformations of the cross-ratio, 
we assume that $x_0<x_1<x_2<x_3<\infty$. If $x_0 = \infty$, we have:
$$[\infty, \bar F(x_1),\bar F(x_2),\bar F(x_3)] = \frac{\bar F(x_1) - \bar F(x_3)}{\bar F(x_1) - \bar F(x_2)} \leq K^4 \frac{x_1-x_3}{x_1-x_2} = K^4 [\infty,x_1,x_2,x_3].$$
The inequality is obtained using the $K^2$-bi-Lipschiptz property of $\bar F$. One shows similarly the
minoration leading to:
$$\frac{1}{K^4} \leq \frac{[\infty, \bar F(x_1),\bar F(x_2),\bar F(x_3)]}{[\infty,x_1,x_2,x_3]} 
\leq K^4.$$
If all four points are real, a very similar computation leads to 
$$\frac{1}{K^8} \leq \frac{[\bar F(x_0), \bar F(x_1),\bar F(x_2),\bar F(x_3)]}{[x_0,x_1,x_2,x_3]} 
\leq K^8.$$
So $\bar F$ is $K^8$-quasi-M\"obius. This proves the first claim of 
the theorem. The second claim easily follows. Indeed, for each $p$, 
$p'$ in $\Lambda$, $\omega$, $\omega'$ in $\Omega$, we compute:
\begin{eqnarray*}
d_{\Lambda'} \left(\bar F(\omega),\bar F(\omega')\right) 
= & \underset{q,q' \in \Lambda'}\max \ln|[q,q',\bar F(\omega),\bar F(\omega')]|\\
= &\underset{p,p' \in \Lambda}\max \ln|[\bar F(p),\bar F(p'),\bar F(\omega),\bar F(\omega')]|\\
\leq & \underset{p,p' \in \Lambda}\max \ln|K^8[p,p',\omega,\omega']|\\
\leq & d_\Lambda(\omega,\omega') + 8\ln K
\end{eqnarray*}
Conversely, a similar computation shows that:
$$d_\Lambda(\omega,\omega') - 8\ln K\leq d_{\Lambda'} \left(\bar F(\omega),\bar F(\omega')\right)
\leq d_\Lambda(\omega,\omega') + 8\ln K.$$
This proves the theorem.
\end{proof}

\bibliographystyle{plain}
\bibliography{biblio}

\begin{flushleft}
  \textsc{E. Falbel, A. Guilloux\\
  Institut de Math\'ematiques de Jussieu-Paris Rive Gauche \\
CNRS UMR 7586 and INRIA EPI-OURAGAN \\
 Sorbonne Universit\'e, Facult\'e des Sciences \\
4, place Jussieu 75252 Paris Cedex 05, France \\}
 \verb|elisha.falbel@imj-prg.fr, antonin.guilloux@imj-prg.fr|
 \end{flushleft}
\begin{flushleft}
  \textsc{P. Will\\
  Institut Fourier, Universit\'e de Grenoble 1, BP 74, Saint Martin d'H\`eres Cedex, France}\\
  \verb|will@univ-grenoble-alpes.fr|
\end{flushleft}

\end{document}